\Crefname{figure}{}{}
\newtheorem{theorem}{Theorem}%  meant for continuous numbers
\newtheorem{proposition}[theorem]{Proposition}% 
\newtheorem{lemma}[theorem]{Lemma}%  meant for continuous numbers
\newtheorem{remark}{Remark}%
\newtheorem{assumption}{Assumption}
\newcommand{\data}{f^\delta}
\newcommand{\dataNoisefree}{f}
\newcommand{\unknown}{u}
\newcommand{\operator}{A}
\newcommand{\noise}{\nu}
\newcommand{\reconstructionOperator}{R}
\newcommand*{\ft}{\ensuremath{\mathcal{F}}}
\newcommand*{\ift}{\ensuremath{\mathcal{F}^{-1}}}
\newcommand{\bR}{\ensuremath{\mathbb{R}}}
\newcommand{\net}{\mathcal{G}}
\newcommand*\diff{\mathop{}\!\mathrm{d}}
\newcommand{\review}[1]{{#1}}
\let\blx@rerun@biber\relax
\title{Convergent Data-driven Regularizations for CT Reconstruction}
\author{Samira Kabri\thanks{Helmholtz Imaging, Deutsches Elektronen-Synchrotron DESY, Notkestr. 85, Hamburg, 22607,
Germany}
\and 
Alexander Auras\thanks{Institute for Vision and Graphics, University of Siegen, Adolf-Reichwein-Straße 2a, Siegen, 57076, Germany}
\and
Danilo Riccio\thanks{School of Mathematical Sciences, Queen Mary University of London, Mile End Road, London, E1 4NS, United Kingdom}
\and
Hartmut Bauermeister\textsuperscript{2}%
\and
Martin Benning\textsuperscript{3,}\thanks{The Alan Turing Institute, British Library, 96 Euston Road, London, NW1 2DB, United Kingdom}%
% %
\and
Michael Moeller\textsuperscript{2}%
\and
Martin Burger\textsuperscript{1,}\thanks{Fachbereich Mathematik, Universit\"at Hamburg, Bundesstrasse 55, Hamburg, 20146, Germany}
}
\date{Corresponding author:
\href{mailto:samira.kabri@desy.de}{samira.kabri@desy.de}}
\begin{document}
\maketitle%
\begin{abstract}
The reconstruction of images from their corresponding noisy Radon transform is a typical example of an ill-posed linear inverse problem as arising in the application of computerized tomography (CT). As the (na\"{\i}ve) solution does not depend on the measured data continuously, \textit{regularization} is needed to re-establish a continuous dependence. In this work, we investigate simple, but yet still provably convergent approaches to \textit{learning} linear regularization methods from data. More specifically, we analyze two approaches: One generic linear regularization that learns how to manipulate the singular values of the linear operator in an extension of \review{our previous work}, and one tailored approach in the Fourier domain that is specific to CT-reconstruction. We prove that such approaches become convergent regularization methods as well as the fact that the reconstructions they provide are typically much smoother than the training data they were trained on. Finally, we compare the spectral as well as the Fourier-based approaches for CT-reconstruction numerically, discuss their advantages and disadvantages and investigate the effect of discretization errors at different resolutions. 
\end{abstract}

\begin{minipage}[t]{.15\textwidth}%
\textbf{Keywords:}
\end{minipage}%
\begin{minipage}[t]{.75\textwidth}%
Inverse Problems, Regularization, Computerized Tomography, Machine Learning
\end{minipage}

\section{Introduction}\label{sec1}
Linear inverse problems are at the heart of a variety of imaging applications, including restoration tasks such as image deblurring as well as the inference of unknown images from measurements that contain implicit information about them as, for instance, arising in computerized tomography (CT), positron emission tomography (PET) or magnetic resonance imaging (MRI). All of these problems are commonly modeled as the task of recovering an image $\unknown$ from measurements
\begin{align}
    \label{eq:inverseProblem}
    \data = \operator \unknown + \noise
\end{align}
for a linear operator $\operator$ and noise $\noise$ characterized by some error bound (\textit{noise level}) $\delta \in [0,\infty[$ to be specified later. A key challenge for most practically relevant problems is that $\operator: X \rightarrow \review{Y}$ is a compact linear operator with infinite dimensional range (assuming here that $X$ and $Y$ are infinite-dimensional Hilbert spaces), leading to zero being an accumulation point of its singular values, and making the pseudo-inverse $\operator^\dagger$ discontinuous. 

\noindent \review{The compactness of $A$ makes it possible to expand it by its singular values, which means to write it in the form
\begin{align}
\label{eq:svd}
    \operator \unknown = \sum_{n=1}^\infty \sigma_n \langle \unknown, u_n \rangle v_n.
\end{align}
where the singular values $\sigma_n > 0$ are non-increasing, $\{u_n\}_{n \in \mathbb{N}}$ and $\{v_n\}_{n \in \mathbb{N}}$ form orthonormal bases of the orthogonal complement of the nullspace of the operator $\mathcal{N}(A)^\perp$, or the closure of its range $\overline{\mathcal{R}(A)}$, respectively. With a slight abuse of notation, $\langle \cdot , \cdot \rangle$ denotes the inner-product on the spaces $X$ and $Y$.}
Classical linear regularization strategies therefore aim to approximate $\unknown$ with so-called \emph{spectral regularization operators} of the form 
\begin{align}
\label{eq:reconstructionSVD}
    \reconstructionOperator(\data;g_\delta) =  \sum_{n=1}^\infty g_\delta(\sigma_n) \langle \data, v_n \rangle u_n \, ,
\end{align}
for a suitable function $g_\delta$ that remains bounded for all $\delta>0$ but for which $g_\delta(\sigma) \rightarrow  1/\sigma$ as $\delta \rightarrow 0$. With a suitable speed of such a pointwise convergence, the continuous dependence on the data, i.e., $\reconstructionOperator(\data;g_\delta) \rightarrow \operator^\dagger \dataNoisefree$, can be reestablished. \review{An extensive overview on requirements for this kind of convergence and classical examples for $g_\delta$ including Tikhonov, Lavrentiev or truncated SVD regularization, can be found in \cite{engl1996regularization}.} 

\noindent In the specific case of the linear operator $$\operator:  L_2(\mathbb{R}^2)  \rightarrow L_2(\mathbb{R}\review{\times} [0,\pi])$$ being the Radon operator defined on functions on the whole space (which is not compact in contrast to integral operators on bounded domains), we have to work with a continuous spectrum.
The most commonly used reconstruction technique for inverting the Radon transform is the \textit{filtered backprojection}, which follows a very similar strategy to \eqref{eq:reconstructionSVD}, but exploits the structure of the operator in a different way: With the central slice theorem, the inverse Radon transform applied to some \review{range-element} $\dataNoisefree \in \review{\mathcal{R}}(\operator)$ is given by
\begin{equation}
\label{eq:radonInversion}
    \operator^{-1}\dataNoisefree = \operator^*\left(\ift_{\text{\footnotesize 1-D}} \left(\hat{\rho} \cdot \ft_{\text{\footnotesize 1-D}}\review{\dataNoisefree}\right) \right),
\end{equation}
where $\operator^*$ denotes the adjoint operator of $\operator$ (also called \textit{back-projection operator}),  $\hat{\rho}(r) = \lvert r \rvert$ is called the \textit{ramp-filter}, $\ft_{\text{\footnotesize 1-D}}$ is the one-dimensional Fourier transform with respect to the spatial offset variable of the Radon transform, and $r$ (in the definition of $\hat{\rho}$) is the variable resulting from the Fourier transform. 

In analogy to the regularized version \eqref{eq:reconstructionSVD}, the most common classical way to ensure a stable reconstruction is to replace \eqref{eq:radonInversion} by 
\begin{equation}
\label{eq:radonRegularizedInversion}
    \reconstructionOperator(\data;\rho_\delta) = \operator^*\left(\ift_{\text{\footnotesize 1-D}} \left(\rho_\delta \cdot \ft_{\text{\footnotesize 1-D}}\data \right) \right),
\end{equation}
with the filter $\rho_\delta$ chosen to avoid amplification of high frequency components with large frequency $\lvert r\rvert$. Common choices include  the \textit{Hamming} and \textit{ramp / Ram-Lak} filters. 

In this work, we study \eqref{eq:reconstructionSVD} and \eqref{eq:radonRegularizedInversion} for functions $g_\delta$ (respectively $\rho_\delta$) that can be learned from data. In particular, we show that the shape of the optimal functions $g_\delta$ and $\rho_\delta$ can be characterized in closed form. We prove that the learned approaches result in convergent regularization methods under certain conditions, investigate their behavior under different discretizations, and conduct numerical experiments on CT reconstruction problems.

\section{Related Work}\label{sec:relatedWork}
Regularization methods for linear inverse problems in general and manipulations of the singular values in particular, have long been studied in applied mathematics, c.f. the classical reference \cite{engl1996regularization} or the more recent overview \cite{benning2018modern}. Classical examples of \eqref{eq:reconstructionSVD} include Lavrentiev, Tikhonov, or truncated SVD regularization. Subsequently, a lot of research has focused on non-linear regularization techniques, such as variational methods or (inverse) scale space flows. Even more recently, researchers have focused on \textit{learning} reconstruction schemes through neural networks, which tend to show significantly stronger practical performances, but often lack theoretical guarantees, e.g. being convergent regularizations (independent of their discretization) with error bounds in suitable (problem-specific) metrics. We refer to the two overview papers  \cite{engl1996regularization, benning2018modern} for classical and nonlinear regularization theory and recall some machine-learning specific regularization approaches below. 

The simplest form of benefiting from data-driven approaches are pre- or post-processing networks, i.e., parameterized functions $\net$ that are either applied to the data $\data$ before exploiting a classical reconstruction technique, or to a preliminary reconstruction like \eqref{eq:reconstructionSVD}. Common architectures in the area of image reconstruction problems are simple convolutional neural networks (CNNs) or multiscale approaches such as the celebrated U-Net architecture \cite{FBPConvNet, UNet}. Direct reconstructions (with different types of problem specific information being accounted for) can, for instance, be found in \cite{AUTOMAP,iRadonMap,iCTNet}. Natural extensions of pre- and postprocessing networks use $\operator$ and \review{its adjoint }to switch between the measurement and reconstruction spaces with intermediate learnable operations in structures that are often motivated by classical iterative reconstruction/optimization methods such as gradient descent, forward-back\review{ward} splitting or primal-dual approaches, e.g.  LEARN \cite{LEARN}, FISTA-Net \cite{FISTANet} or the learned primal-dual method  \cite{LearnedPrimalDual}. Yet, without further restrictions, such methods do not allow to prove error estimates or convergence results. 

Coming from the perspective of regularization schemes based on variational methods, many approaches have suggested to \text{learn} the regularizer, starting from (sparsity-based) dictionary learning, e.g. \cite{mairal2008supervised, aharon2006k}, over learning (convex and nonconvex) regularizers motivated by sparsity penalties, e.g. \cite{roth2009fields,chen2014insights,kobler2020total}, to schemes that merely learn descent directions \cite{moeller2019controlling} or operators provably being \review{convergent to a global optimum }(\cite{romano2017little}) or playing the role of (\cite{meinhardt2017learning, rick2017one}) a proximal operator, to reparametrizations of the unknown in unsupervised settings (e.g. the \textit{deep image prior} \cite{ulyanov2018deep}) or in a learned latent space of realistic reconstruction (\cite{bora2017compressed,latorre2019fast}). In a similar fashion, deep equilibrium models \cite{bai2019deep} generalize the optimality conditions arising from learning regularizers (c.f. \cite{riccio2022regularization}) and can, for instance, be combined with convex neural networks \cite{amos2017input} for learning suitable regularizations. Yet, the above approaches are either not viewed in the light of infinite dimensional problems, or are non-convex, such that standard regularization results do not apply, or at least require the ability to compute global minimizers (as in the analysis of \cite{li2020nett} or strict assumptions like the tangential cone condition \cite{aspri2020data}). For a recent overview on data-driven approaches in the context of inverse problems, we refer the reader to \cite{arridge2019solving}. 

Even if properties like convexity of the regularizer allow to deduce convergence and/or error estimates, the complex parametrization of the above methods makes it highly difficult to understand \textit{how} a data-driven approach extracts information from the training data, which quantities matter and what properties a learned regularization has. Therefore, our approach in this paper is to extend our previous work \cite{bauermeister2020learning} by studying the simple linear (singular-value based) reconstruction \eqref{eq:reconstructionSVD} as well as the application-specific learning of a filter in Radon-inversion \eqref{eq:radonRegularizedInversion} in order to provide more insights into properties of learned regularization techniques. \review{While optimizing such spectral filters in a data-driven manner has been studied in the past (c.f. \cite{chung2011designing}), to the best of our knowledge it has not been studied to which extent the obtained reconstruction operators fulfill the characteristics of classical regularization methods.}

While a large number of different data-driven approaches have been tailored to CT-reconstruction (c.f. \cite{REDCNN,CTDIP,CTPhysDIP,DictLearnRecon} for particular examples or \cite{LeuschnerSurvey,ZhangSurvey,WangSurvey} for surveys), they largely follow the above categories in terms of theoretical guarantees and an understanding of the underlying learning process. 

\review{Our framework is close to learning the optimal Tikhonov regularizer, which was extended to the infinite dimensional case and studied with a focus on the generalization to unseen data in \cite{alberti21}. Although our approach is more restricted and requires the knowledge of the forward operator and its singular value expansion, it allows for deriving reasonable assumptions to guarantee convergence in the no-noise-limit.}

\section{Supervised Learning of Spectral Regularizations}\label{sec:regularization}
%\MM{Briefly motivate / transition from previous section}
In this Section we derive a closed-form solution for $g_\delta(\sigma_n)$ in \eqref{eq:reconstructionSVD} when the expectation of the squared norm difference between $\unknown$ and the regularization applied to data $\data$ is minimized. Here $\unknown$ and $\data \review{= \operator\unknown + \noise}$ come from a distribution of training data. We then analyze the corresponding regularization operator and show that the operator is a convergent linear regularization.

\subsection{Optimally Learned Spectral Regularization}\label{sec:optimally-learned-spectral}
In this Section, we study the approach of learning the function $g_\delta$ in the approach of \eqref{eq:reconstructionSVD} from a theoretical perspective. First of all note that due to the assumption of $A$ being compact (and hence having a discrete spectrum), only the evaluations of $g_\delta$ at $\sigma_n$ matter, such that we will focus on the optimal values $g_n:= g_\delta(\sigma_n)$ directly. \review{For the sake of simplicity, we can guarantee the well-definedness of $g(\sigma_n) = g_n$ by assuming the singular values $\sigma_n$ to be strictly decreasing and have multiplicity $\mu(\sigma_n) = 1$.} Let us \review{further} assume that our data formation process \eqref{eq:inverseProblem} arises with noise drawn independently from $\unknown$ from a noise distribution parameterized by the noise level $\delta$ with zero mean. 

For a fixed noise level $\delta$, the most common way to learn parameters (i.e. the $g_n$ in our case) is to minimize the expectation of a suitable loss, e.g. the squared norm, over the training distribution of pairs $(\unknown, \data)$ of the desired ground truth $\unknown$ and the noisy data $\data$. Thus, the ideal learned method is obtained by choosing
$$ \overline{g} = \text{arg}\min_g \mathbb{E}\review{_{\unknown,\noise}}( \Vert \unknown - \reconstructionOperator(\data;g) \Vert^2 ),$$
\review{where $\mathbb{E}_{\unknown,\noise}$ denotes the expected value with respect to the joint distribution of the ground truth data and the noise}.
With the spectral decomposition \eqref{eq:reconstructionSVD} we have 
\begin{align*}
     \Vert \unknown - \reconstructionOperator(\data;g) \Vert^2 &= \review{\Vert u_0 \Vert^2 +}  \sum_n ( (1- \sigma_n g_n)\langle \unknown, u_n \rangle + g_n \langle \noise, v_n \rangle)^2 \\
     &= \review{\Vert u_0 \Vert^2 +} \sum_n ( (1- \sigma_n g_n)^2 \langle \unknown, u_n \rangle^2 + g_n^2 \langle \noise, v_n \rangle^2 \\
       & \quad \review{-}  2 (1- \sigma_n g_n)g_n \langle \unknown, u_n \rangle  \langle \noise, v_n \rangle ),
\end{align*}
\review{where $u_0 \in \mathcal{N}(A)$ is the unique projection of $u$ onto the nullspace of the operator.}
Due to the independence of $\unknown$ and $\noise$ together with the zero mean of the noise we find
$$ \mathbb{E}\review{_{\unknown,\noise}}( \langle \unknown, u_n \rangle  \langle \noise, v_n \rangle )  = 0, $$
hence
$$ \mathbb{E}\review{_{\unknown,\noise}}( \Vert \unknown - \reconstructionOperator(\data;g) \Vert^2 ) =  \review{\mathbb{E}_\unknown\left(\Vert u_0 \Vert^2\right) +} \sum_n ( (1- \sigma_n g_n)^2 \Pi_n  + g_n^2 \Delta_n)$$
with 
$$ \Pi_n := \mathbb{E}\review{_{\unknown}}( \langle \unknown, u_n \rangle^2), \qquad \Delta_n := \mathbb{E}\review{_{\noise}}(\langle \noise, v_n \rangle^2 ),$$
\review{where $\mathbb{E}_{\unknown}$ and $\mathbb{E}_{\noise}$ denote the expected values with respect to the marginal distributions of the ground truth data, or the noise, respectively. Since the whole training only makes sense in the chosen spaces $X$ and $Y$ if the ground truth data $u$ are indeed elements of $X$, we shall assume in the following without further notice that
\begin{align*} \sum_n \Pi_n = \sum_n \mathbb{E}_{\unknown}( \langle \unknown, u_n \rangle^2) = \mathbb{E}_{\unknown}\left( \sum_n \langle \unknown, u_n \rangle^2\right) = \mathbb{E}_{\unknown}(\Vert \unknown \Vert^2) < \infty.\end{align*} }
\noindent The above problem can be minimized for each $g_n$ separately by solving a quadratic minimization problem, \review{where a solution is given by}
\begin{equation}\label{eq:optimalg} \overline{g}_n = \frac{\sigma_n \Pi_n}{\sigma_n^2 \Pi_n + \Delta_n}.\end{equation}
\review{To obtain uniqueness of the solution, we assume that $\Pi_n > 0$ or $\Delta_n > 0$ for all $n \in \mathbb{N}$ throughout this paper.}
\review{\begin{remark}
    The above result can be generalized to the case of singular values not strictly decreasing. Since $\sigma_n$ is converging to zero, each singular value has finite multiplicity and we can rewrite them as a sequence of strictly decreasing singular values $\sigma_n$ with multiplicity $\mu(\sigma_n) \geq 1$ and corresponding singular functions $\{u_{nm}\}_{m=1}^{\mu(\sigma_n)}$ and $\{v_{nm}\}_{m=1}^{\mu(\sigma_n)}$, i.e.,
    \[Au = \sum_n \sigma_n \sum_{m = 1}^{\mu(\sigma_n)} \langle u, u_{nm}\rangle v_{nm} \qquad \text{and} \qquad R(f,g) = \sum_n g_n \sum_{m = 1}^{\mu(\sigma_n)} \langle f, v_{nm}\rangle u_{nm}.\] 
    In this case, the optimal coefficients $g_n$ are again given by \eqref{eq:optimalg}, where we use the generalized variance coefficients \[ \Pi_n := \mathbb{E}_u\left( \sum_{m = 1}^{\mu(\sigma_n)}\langle \unknown, u_{nm} \rangle^2\right), \qquad \Delta_n := \mathbb{E}_{\nu}\left( \sum_{m = 1}^{\mu(\sigma_n)}\langle \noise, v_{nm} \rangle^2 \right).\]
\end{remark}}
\noindent We observe that \review{for $\Pi_n > 0$, Equation} \eqref{eq:optimalg} \review{ can be rewritten as 
\begin{align*}
     \overline{g}_n = \frac{\sigma_n}{\sigma_n^2+ \Delta_n/ \Pi_n}
\end{align*}and therefore} has the same form as the classical Tikhonov regularization
$$ \hat{g}_n = \frac{\sigma_n}{\sigma_n^2 + \alpha} $$
with the commonly fixed regularization parameter $\alpha$ being replaced by a componentwise adaptive and data-driven term $\Delta_n/\Pi_n$. \review{We note that the restricted structure of the regularizer prevents it from approximating the nullspace component of the unknown. This is quantified by the term $\mathbb{E}_\unknown\left(\Vert u_0 \Vert^2\right)$ which is independent of the choice of $g$.} For the generalization error this implies
\begin{align*}
    &\min_g \, \mathbb{E}\review{_{\unknown,\noise}}( \Vert \unknown - \reconstructionOperator(\data;g) \Vert^2 )  \review{- \mathbb{E}_\unknown\left(\Vert u_0 \Vert^2\right)}  \\
       & \quad {} = {} \mathbb{E}\review{_{\unknown,\noise}}( \Vert \unknown - \reconstructionOperator(\data;\overline{g}) \Vert^2 ) \review{- \mathbb{E}_\unknown\left(\Vert u_0 \Vert^2\right)}= \sum_n ( (1- \sigma_n \overline{g}_n)^2 \Pi_n  + \overline{g}_n^2 \Delta_n) \\
   & \quad {} = {} \sum_n \left( \left(1- \frac{\sigma_n^2}{\sigma_n^2 + \Delta_n/\Pi_n} \right)^2 \Pi_n  + \frac{\sigma_n^2}{(\sigma_n^2 + \Delta_n/\Pi_n)^2} \Delta_n \right) \\
     & \quad {} = {}  \sum_n \left( \Pi_n + \frac{\sigma_n^2 \Delta_n + \sigma_n^4 \Pi_n - 2 \Pi_n \sigma_n^2 (\sigma_n^2 + \Delta_n/\Pi_n)}{(\sigma_n^2 + \Delta_n / \Pi_n)^2} \right) \\
     & \quad {} = {}  \sum_n \left( \Pi_n - \frac{\sigma_n^2 \Delta_n + \sigma_n^4 \Pi_n}{(\sigma_n^2 + \Delta_n / \Pi_n)^2} \right) = \sum_n \frac{\Pi_n (\sigma_n^2 + \Delta_n / \Pi_n)^2 - \sigma_n^2 \Delta_n - \sigma_n^4 \Pi_n}{(\sigma_n^2 + \Delta_n / \Pi_n)^2} \\
     & \quad {} = {} \sum_n \frac{\sigma_n^2 \Delta_n + \Delta_n^2 / \Pi_n}{(\sigma_n^2 + \Delta_n / \Pi_n)^2} = \sum_n \frac{\Delta_n}{ \sigma_n^2 + \Delta_n / \Pi_n} = \sum_n \frac{\Delta_n \Pi_n}{ \sigma_n^2 \Pi_n + \Delta_n} \, , 
\end{align*}
and hence, 
\begin{align}
    \min_g \, \mathbb{E}\review{_{\unknown,\noise}}( \Vert \unknown - \reconstructionOperator(\data;g) \Vert^2 ) \review{- \mathbb{E}_\unknown\left(\Vert u_0 \Vert^2\right)} = \sum_n \frac{\Delta_n \Pi_n}{ \sigma_n^2 \Pi_n + \Delta_n} \, . \label{eq:optimal-expected-error}
\end{align}

In general we will need an assumption about the structure of the noise in the data set. First of all we interpret the noise level $\delta$ in a statistical sense as a bound for the noise standard deviation, however formulated such that it includes white noise. This leads to 
\begin{equation}
 \delta^2 = \review{\sup_{n \in \mathbb{N}}} \Delta_n,
\end{equation}
which we will assume throughout the paper without further notice. Note that when studying the zero noise limit $\delta \rightarrow 0$ it is natural to interpret $\Delta_n$ as depending on $\delta$ and in these instances we will use the explicit notation $\Delta_n(\delta)$. We will refer to the case of white noise as $\Delta_n(\delta) = \delta^2$ for all $n\in\mathbb{N}$.

Obviously we expect the clean images to be smoother than the noise, which means that their decay in the respective singular basis is faster. We thus formulate the following assumption, which is automatically true for white noise ($\Delta_n = \delta^2$ for all $n$), since $\Pi_n$ goes to zero: 
\begin{assumption}\label{ass:signalnoise}
For $\delta > 0$, there exists $n_0 \in  \mathbb{N}$ such that the sequence $\Delta_n/\Pi_n$ is well-defined for $n \geq n_0$ and diverging to $+\infty$.
\end{assumption}

For some purposes it will suffice to assume a weaker form, which only has a lower bound instead of divergence to infinity:

\begin{assumption}\label{ass:signalnoiseb}
There exists $c > 0$ and  $n_0 \in  \mathbb{N}$ such that $\Delta_n(\delta) \geq c \, \delta^2 \, \Pi_n$ for every $n \geq n_0$ and $\delta > 0$.
\end{assumption}

With Assumption \ref{ass:signalnoiseb} we can conclude that the operator $\reconstructionOperator(\cdot;\overline{g})$ with $\overline{g}$ given by \eqref{eq:optimalg} is a linear and bounded operator for stricly positive $\delta$.

\begin{lemma}
Let Assumption \ref{ass:signalnoiseb} be satisfied, then $\reconstructionOperator(\cdot;\overline{g})$ with $\overline{g}$ as defined in \eqref{eq:optimalg} is a bounded linear operator for $\delta > 0$. 
\begin{proof}
By construction, the operator $\reconstructionOperator$ is linear in its argument $\data$. \review{ We first note that $\Pi_n = 0$ implies $\overline{g}_n= 0$.} For  \review{$\Pi_n > 0$ and} $n \geq n_0$, the condition $\Delta_n \geq c \, \delta^2 \, \Pi_n$ implies
\begin{align*}
    \overline{g}_n = 
\frac{\sigma_n}{\sigma_n^2 + \Delta_n/\Pi_n} \leq \frac{1}{2 \sqrt{\Delta_n/\Pi_n}} \leq \frac{1}{2 \sqrt{c} \delta} \, ,
\end{align*}
due to $\sigma^2_n + \Delta_n/\Pi_n \geq 2\sigma_n\sqrt{\Delta_n/\Pi_n}$. For $n<n_0$ we have
$$\overline{g}_n \leq \frac{1}{\sigma_{n_0}}.$$
Hence, we conclude 
$$\| \reconstructionOperator(\data, \overline{g}) \| \leq \frac{1}{\min\{ \sigma_{n_0},2 \sqrt{c} \delta \}}\| \data \| \qquad \forall \data,$$ which implies that the operator $\reconstructionOperator(\cdot;\overline{g})$ is a bounded linear operator for $\delta > 0$.
\end{proof}
\end{lemma}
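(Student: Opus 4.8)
The plan is to separate the two defining properties. Linearity of $\reconstructionOperator(\cdot;\overline{g})$ is immediate from the representation \eqref{eq:reconstructionSVD}, since $\data \mapsto \sum_n \overline{g}_n \langle \data, v_n\rangle u_n$ is a diagonal superposition of the bounded linear functionals $\data \mapsto \langle \data, v_n\rangle$. The whole task therefore reduces to boundedness, and for an operator of this diagonal form in the singular system this amounts to a uniform bound on the multiplier sequence $(\overline{g}_n)_n$: if $\sup_n \overline{g}_n =: M < \infty$, then Parseval together with the orthonormality of $(v_n)_n$ yields $\|\reconstructionOperator(\data;\overline{g})\|^2 = \sum_n \overline{g}_n^2 \langle \data, v_n\rangle^2 \le M^2 \sum_n \langle \data, v_n\rangle^2 \le M^2 \|\data\|^2$. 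Hence the entire argument collapses to controlling $\sup_n \overline{g}_n$.

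To bound the multipliers I would split the index set at the threshold $n_0$ supplied by Assumption \ref{ass:signalnoiseb}. For the tail $n \ge n_0$, the assumption $\Delta_n(\delta) \ge c\,\delta^2\,\Pi_n$ gives $\Delta_n/\Pi_n \ge c\,\delta^2$, and the key estimate is the elementary arithmetic--geometric mean inequality applied to the denominator, $\sigma_n^2 + \Delta_n/\Pi_n \ge 2\sigma_n\sqrt{\Delta_n/\Pi_n}$. Substituting this into \eqref{eq:optimalg} cancels the factor $\sigma_n$ in the numerator and leaves $\overline{g}_n \le \tfrac{1}{2}(\Delta_n/\Pi_n)^{-1/2} \le \tfrac{1}{2\sqrt{c}\,\delta}$, a bound independent of $n$. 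For the finite head $n < n_0$, I would simply drop the nonnegative term $\Delta_n/\Pi_n$ from the denominator to obtain $\overline{g}_n \le \sigma_n/\sigma_n^2 = 1/\sigma_n$, and then use the monotone ordering $\sigma_1 \ge \sigma_2 \ge \cdots$ of the singular values to bound this uniformly by $1/\sigma_{n_0}$ over the finitely many relevant indices.

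Combining the two regimes gives $\sup_n \overline{g}_n \le \big(\min\{\sigma_{n_0}, 2\sqrt{c}\,\delta\}\big)^{-1}$, which is finite precisely because $\delta > 0$, and the operator-norm bound follows from the Parseval estimate above. The argument is essentially routine; the only point requiring a moment's care is the tail estimate, where one must recognize that pairing the lower bound $\Delta_n/\Pi_n \ge c\,\delta^2$ with the AM--GM step is exactly what removes the $n$-dependence and exposes the role of $\delta$ as a regularization parameter---note that the resulting bound $1/(2\sqrt{c}\,\delta)$ diverges as $\delta \to 0$, consistent with the expected instability in the noise-free limit. A minor technical caveat worth flagging is that the quotient form of \eqref{eq:optimalg} presupposes $\Pi_n > 0$; whenever $\Pi_n = 0$ one reads $\overline{g}_n = 0$ directly from the equivalent fraction $\sigma_n\Pi_n/(\sigma_n^2\Pi_n + \Delta_n)$, which trivially respects all the above bounds.
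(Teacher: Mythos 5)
Your proof is correct and takes essentially the same route as the paper's own: the identical split of indices at $n_0$, the same AM--GM estimate $\sigma_n^2 + \Delta_n/\Pi_n \geq 2\sigma_n\sqrt{\Delta_n/\Pi_n}$ giving the tail bound $1/(2\sqrt{c}\,\delta)$, the same head bound $1/\sigma_{n_0}$, and the same resulting operator-norm bound $\bigl(\min\{\sigma_{n_0}, 2\sqrt{c}\,\delta\}\bigr)^{-1}$. Your explicit Parseval reduction to $\sup_n \overline{g}_n$ and the caveat about $\Pi_n = 0$ are details the paper leaves implicit, but they do not change the argument.
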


\begin{remark}
In practical scenarios with finite (empirical) data $(u^i, f^i)_{i=1,\hdots, N}$ one typically minimizes the empirical risk 
\begin{equation*}
    \frac{1}{N} \sum_{i=1}^N \|u^i - R(f^i; g) \|^2
\end{equation*}
instead of the expectation, leading to coefficients
\begin{equation}\label{eq:empOptSVD}\overline{g}_n^N = \frac{\sigma_n \Pi_n^N \review{+} \Gamma_n^N}{\sigma_n^2 \Pi_n^N + \Delta_n^N + 2 \sigma_n \Gamma_n^N} \, , \end{equation}
with 
$$  \Pi_n^N := \frac{1}N \sum_{i=1}^N \langle u^i, u_n \rangle^2, \quad 
\Delta_n^N :=\frac{1}N \sum_{i=1}^N \langle \nu^i, v_n \rangle^2, \quad 
\Gamma_n^N :=\frac{1}N \sum_{i=1}^N \langle u^i, u_n \rangle ~\langle \nu^i, v_n \rangle \, .$$
Yet, a convergence analysis of the empirical risk minimization requires further (strong) assumptions to control $\Gamma_n^N$, such that we limit ourselves to the analysis of the ideal case \eqref{eq:optimalg} in this work.
\end{remark}
\subsection{Range conditions}\label{sec:rangecon}

We start our analysis of the learned regularization method with an inspection of the range condition (cf. \cite{benning2018modern}), which for Tikhonov-type regularization methods equals the so-called source condition in classical regularization methods (cf. \cite{engl1996regularization}). More precisely, we ask which elements $u \in X$ can be obtained from the regularization operator $\reconstructionOperator(\cdot,\overline{g})$ for some data $f$, i.e. we characterize the range of $\reconstructionOperator(\cdot,\overline{g})$. Intuitively, one might expect that the range of $R$ includes the set of training samples, or in a probabilistic setting the expected smoothness of elements in the range (with expectation over the noise and training images) should equal the expected smoothness of elements of the training images. However, as we shall see below, the expected smoothness of reconstructions is typically higher. 

We start with a characterization of the range condition\review{, where we again denote the range of an operator by $\mathcal{R}$}. 
\begin{proposition} Let $\overline{g}$ be given by \eqref{eq:optimalg}. Then $u \in \mathcal{R}(\reconstructionOperator(\cdot,\overline{g}))$ if and only if
\begin{equation} \label{eq:rangecondition}
\sum_n \frac{\Delta_n^2}{\Pi_n^2 \sigma_n^2} \langle u , u_n \rangle ^2 < \infty .
\end{equation}
If  Assumption \ref{ass:signalnoiseb} is satisfied we have in particular $u \in \mathcal{R}(\operator^*)$ and in the case of white noise the range condition \eqref{eq:rangecondition} holds if and only if
\begin{equation}  \label{eq:whitenoiserangecondition}
\sum_n \frac{\langle u , u_n \rangle^2}{\Pi_n^2 \sigma_n^2}  < \infty .
\end{equation}
\end{proposition}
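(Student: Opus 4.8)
The plan is to regard $\reconstructionOperator(\cdot,\overline{g})$ as a diagonal operator between the singular systems $\{v_n\}$ and $\{u_n\}$ and to use the standard Picard-type description of the range of such an operator, afterwards simplifying the resulting series to the form \eqref{eq:rangecondition}.

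First I would note that every element produced by $\reconstructionOperator(\cdot,\overline{g})$ lies in the closed linear span of $\{u_n\}$, and that for such a $u$ the identity $u = \reconstructionOperator(f,\overline{g})$ holds for some $f \in Y$ exactly when $\langle f, v_n\rangle = \langle u, u_n\rangle/\overline{g}_n$ for every $n$; here $\overline{g}_n > 0$ whenever $\sigma_n,\Pi_n>0$. Since $\{v_n\}$ is orthonormal, such an $f$ exists in $Y$ if and only if these coefficients are square-summable, i.e. $\sum_n \langle u, u_n\rangle^2/\overline{g}_n^2 < \infty$. Inserting \eqref{eq:optimalg} and expanding the square yields
\begin{equation*}
\frac{1}{\overline{g}_n^2} = \frac{(\sigma_n^2\Pi_n+\Delta_n)^2}{\sigma_n^2\Pi_n^2} = \sigma_n^2 + \frac{2\Delta_n}{\Pi_n} + \frac{\Delta_n^2}{\sigma_n^2\Pi_n^2},
\end{equation*}
so it remains to show that $\sum_n \langle u, u_n\rangle^2/\overline{g}_n^2$ converges precisely when \eqref{eq:rangecondition} does.

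One implication is immediate, as all three summands are nonnegative and the last one is the weight appearing in \eqref{eq:rangecondition}. For the converse, assume \eqref{eq:rangecondition} holds. The contribution of the first term is harmless, since $\sum_n \sigma_n^2\langle u, u_n\rangle^2 \leq \|\operator\|^2\|u\|^2 < \infty$. The cross term is the only point requiring an idea: I would bound it by Cauchy--Schwarz against the other two weights,
\begin{equation*}
\sum_n \frac{2\Delta_n}{\Pi_n}\langle u, u_n\rangle^2 \leq 2\left(\sum_n \frac{\Delta_n^2}{\sigma_n^2\Pi_n^2}\langle u, u_n\rangle^2\right)^{1/2}\left(\sum_n \sigma_n^2\langle u, u_n\rangle^2\right)^{1/2},
\end{equation*}
which is finite by the preceding two bounds. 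Hence $\sum_n \langle u, u_n\rangle^2/\overline{g}_n^2 < \infty$, establishing the equivalence without any additional assumption. This Cauchy--Schwarz step, splitting $\frac{\Delta_n}{\Pi_n}\langle u, u_n\rangle^2 = \big(\frac{\Delta_n}{\sigma_n\Pi_n}\langle u, u_n\rangle\big)\big(\sigma_n\langle u, u_n\rangle\big)$, is the main (though small) obstacle; everything else is routine diagonal-operator bookkeeping.

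For the two consequences I would argue as follows. Under Assumption \ref{ass:signalnoiseb} one has $\Delta_n/\Pi_n \geq c\delta^2$ for $n\geq n_0$, so the weight in \eqref{eq:rangecondition} satisfies $\frac{\Delta_n^2}{\Pi_n^2\sigma_n^2} = (\Delta_n/\Pi_n)^2\,\sigma_n^{-2} \geq c^2\delta^4\,\sigma_n^{-2}$; consequently \eqref{eq:rangecondition} forces $\sum_n \sigma_n^{-2}\langle u, u_n\rangle^2 < \infty$ (the finitely many indices below $n_0$ being harmless), which is precisely the Picard condition for $u \in \mathcal{R}(\operator^*)$. Finally, in the white-noise case $\Delta_n = \delta^2$ for all $n$, the constant factor $\delta^4$ factors out of \eqref{eq:rangecondition}, which therefore reduces to \eqref{eq:whitenoiserangecondition}.
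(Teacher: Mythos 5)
Your proposal is correct and follows essentially the same route as the paper: characterize $\mathcal{R}(\reconstructionOperator(\cdot,\overline{g}))$ via square-summability of $\langle u,u_n\rangle/\overline{g}_n$, isolate the dominant weight $\Delta_n/(\sigma_n\Pi_n)$, and then specialize under Assumption \ref{ass:signalnoiseb} and for white noise exactly as the paper does. The only difference is cosmetic: the paper works with the unsquared decomposition $\frac{1}{\overline{g}_n} = \sigma_n + \frac{\Delta_n}{\sigma_n\Pi_n}$, so the equivalence follows directly from the $\ell^2$ triangle inequality together with $\sigma_n\langle u,u_n\rangle \in \ell^2(\mathbb{N})$, making your Cauchy--Schwarz treatment of the cross term unnecessary.
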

\begin{proof}
Given the form of $\overline{g}$ we see that 
$u \in \mathcal{R}(\reconstructionOperator(\cdot,\overline{g}))$
if 
$$ \langle u,u_n \rangle = \overline{g}_n \langle f, v_n \rangle$$
for some $f \in Y$, which is further equivalent to $\frac{1}{\overline{g}_n} \langle u,u_n \rangle  \in \ell^2(\mathbb{N})$. We see that 
$$ \frac{1}{\overline{g}_n} = \sigma_n + \frac{\Delta_n}{\sigma_n\Pi_n}$$
and due to the boundedness of $\sigma_n$, we find $\sigma_n \langle u,u_n \rangle  \in \ell^2(\mathbb{N})$ for any $u \in X$. Hence, the range condition is satisfied if an only if 
$$ \frac{\Delta_n}{\sigma_n\Pi_n} \langle u,u_n \rangle \in \ell^2(\mathbb{N}) $$
holds, which is just \eqref{eq:rangecondition}.
Under Assumption \ref{ass:signalnoiseb} we have
$\frac{\Delta_n}{\sigma_n\Pi_n}\geq \frac{c \, \delta^2}{\sigma_n}$, thus
$$ \sum_n \frac{1}{\sigma_n^2} \langle u, u_n \rangle^2 < \infty.$$
This implies $u =A^*w$ with 
$$ w = \sum_n \frac{1}{\sigma_n} \langle u, u_n\rangle v_n \in Y.$$
For white noise, we have $\Delta_n = \delta^2$, which implies that \eqref{eq:rangecondition} is satisfied if and only if \eqref{eq:whitenoiserangecondition} is satisfied.
\end{proof}

Let us mention that under Assumption \ref{ass:signalnoise} the range condition of the learned regularization method is actually stronger than the source condition $u=A^*w$, since we even have
$$ \sum_n \frac{\Delta_n^2}{\Pi_n^2} \langle w, v_n \rangle^2 < \infty \, ,$$
with the weight $\frac{\Delta_n^2}{\Pi_n^2}$ diverging under Assumption \ref{ass:signalnoise}. This indicates that the learned regularization might be highly smoothing or even oversmoothing, which depends on the roughness of the noise compared to the signal, i.e. the quotient $\frac{\Delta_n}{\Pi_n}$.

\review{Although the range of the reconstruction operator is still dense in the range of $A^*$ if $\Pi_n > 0$ for all $n \in \mathbb{N}$}, the oversmoothing effect of the learned spectral regularization method can be made clear by looking at the expected smoothness of the reconstructions, i.e.\review{,} 
\begin{equation}
\tilde{\Pi}_n  = 
\mathbb{E}_{u,\nu}(\langle \reconstructionOperator(Au+\nu,\overline{g}) , u_n \rangle ^2 ) \, .
\end{equation}
A straight-forward computation yields 
\begin{equation}
\tilde{\Pi}_n  = \frac{\sigma_n^2 \Pi_n}{\sigma_n^2 \Pi_n + \Delta_n} \Pi_n = 
\frac{1}{1 + \frac{\Delta_n}{\sigma_n^2 \Pi_n}} \Pi_n. 
\end{equation}
While we expect (at least on average) a similar smoothness of the reconstructions as for the training images, i.e. $\tilde{\Pi}_n \approx \Pi_n$, we find under Assumption \ref{ass:signalnoiseb} that
$$ \tilde{\Pi}_n  \leq \frac{1}{1 + \frac{c \delta^2}{\sigma_n^2 }} \Pi_n \approx \frac{\sigma_n^2}{c\delta^2} \Pi_n, $$
%$$ \tilde{\Pi}_n  \leq \frac{1}{1 + \frac{c \delta^2}{\sigma_n^2 }} \Pi_n \approx \Pi_n, $$
%\MBe{This should now be $\tilde{\Pi}_n  \leq \Pi_n / (1 + (c\delta^2)/\sigma_n^2) \approx \Pi_n \sigma_n^2 / (c \delta^2)$.}
for large $n$, i.e. $\frac{\tilde \Pi_n}{\Pi_n}$ converges to zero at least as fast as $\sigma_n^2$. The distribution of reconstruction thus has much more mass on smoother elements of $X$ than the training set. The main reason for this behaviour seems to stem from the noise in the data. Thus, although the method is supervised and tries to match all reconstructions to the training data, it needs to produce a linear operator that maps noise to suitable elements of $X$ as well.  Note that the behaviour changes with $\delta \rightarrow 0$. For \review{small $\frac{\Delta_n}{\sigma_n^2 \Pi_n}$}, the denominator is approximately equal to one, hence 
$$ \tilde \Pi_n \approx \Pi_n,$$
i.e. in the limit the right degree of smoothness will be restored.

\review{Another perspective on the expected smoothness is to investigate the bias of the obtained reconstruction operator, or, since we cannot expect to reconstruct nullspace elements \begin{align*}
    \Bar{e}_0 := \mathbb{E}_u\left ( \| u - R(Au,\,\overline{g})\|^2 \right) &- \mathbb{E}_\unknown\left(\Vert u_0 \Vert^2\right).
\end{align*}
Inserting the definition of $\overline{g}$ we derive with that for any $N \in \mathbb{N}$
\begin{align*}
     \Bar{e}_0 = \sum_n \frac{1}{\left(\frac{\sigma_n^2 \Pi_n}{\Delta_n} + 1\right)^2}\Pi_n \leq \frac{\delta^2}{\min_{\substack{n \leq N \\ \Pi_n > 0}} (\sigma_n^2 \Pi_n)}\sum_{n \leq N} \Pi_n + \sum_{n > N} \Pi_n,
\end{align*} 
which yields convergence of the bias to zero as $\delta \rightarrow 0$. We see that, just like the velocity of the convergence of the variance coefficients $\Tilde{\Pi}_n$, the velocity of the convergence of the bias is determined by the convergence of the factors $\frac{\Delta_n}{\sigma_n^2 \Pi_n} \rightarrow 0$.}
\subsection{Convergence of the Regularization}\label{sec:convergence}

In the following we will investigate the convergence properties of the learned spectral regularization as the noise level $\delta$ tends to zero. 
For this sake we will write $\Delta_n(\delta)$ and \review{$\overline{g}(\delta)$} here to make clear that we are looking at a sequence converging to zero. 

\noindent We shall naturally consider convergence in mean-squared-error, i.e. 
\begin{equation}
e(u,\delta) := \mathbb{E}_\nu(\Vert \unknown - \reconstructionOperator(\data;\overline{g}\review{(\delta)})\Vert^2) \label{eq:wasserstein-distance}
\end{equation}
This equals the $2$-Wasserstein distance (cf. \review{\cite[Sec. 2.1]{Ambrosio2013}}) between the concentrated measure at $u$ and the distribution generated by the regularization applied to random noisy data, thus can be understood as a concentration of measure in the zero noise limit.

In addition to the convergence of the regularization for single elements $u \in X$ we can also look at the mean-squared error over the whole distribution of given images and noise, i.e. 
\begin{align} \begin{aligned}\overline{e}(\delta) :=   \mathbb{E}_u(e(u,\delta) ) &\review{- \mathbb{E}_\unknown\left(\Vert u_0 \Vert^2\right)} = \\ &\mathbb{E}_{u,\nu}(\Vert \unknown - \reconstructionOperator(\data;\overline{g}\review{(\delta)})\Vert^2) \review{- \mathbb{E}_\unknown\left(\Vert u_0 \Vert^2\right)} \, \label{eq:mse-whole-distribution}\end{aligned}\end{align}
which is identical to the quantity \eqref{eq:optimal-expected-error} with emphasis of the dependency of $\Delta_n$ on $\delta$.

With the following theorem we verify that $e(u, \delta)$ as defined in \eqref{eq:wasserstein-distance} and $\overline{e}(\delta)$ as defined in \eqref{eq:mse-whole-distribution} are indeed converging to zero for $\delta$ converging to zero.

\begin{theorem} \review{For $u \in \mathcal{N}(A)^\perp$, let $\langle u,u_n\rangle = 0$ for all indices $n$ with $\Pi_n = 0$. Then} $e(u,\delta) \rightarrow 0$ as $\delta \rightarrow 0$. Moreover, we also have $\overline{e}(\delta) \rightarrow 0$ for $\delta \rightarrow 0$.
\end{theorem}
\begin{proof}
We start by computing
\begin{align*}
    \Vert \unknown - \reconstructionOperator(\data; \overline{g}\review{(\delta)}) \Vert^2 = \sum_{n: \Pi_n > 0} &\left[ (1 - \sigma_n \overline{g}_n\review{(\delta)})^2 \langle \unknown, u_n \rangle^2 + \overline{g}\review{(\delta)}^2 \langle \noise, v_n \rangle^2 \right.\\
    &\left. + 2(1 - \sigma_n \overline{g}_n\review{(\delta)})\overline{g}_n\review{(\delta)} \langle \unknown, u_n \rangle \langle \noise, v_n \rangle \vphantom{(1 - \sigma_n \overline{g}_n\review{(\delta)})^2} \right] \,.
\end{align*}
Taking expectations with respect to the noise yields
\begin{align*}
    \mathbb{E}_\nu(\Vert \unknown - \reconstructionOperator(\data; \overline{g}\review{(\delta)}) \Vert^2) &= \sum_{\substack{n, \\ \review{\Pi_n > 0}}} \left[ (1 - \sigma_n \overline{g}_n\review{(\delta)})^2 \langle \unknown, u_n \rangle^2 + \overline{g}_n\review{(\delta)}^2 \Delta_n\review{(\delta)} \right] \\
    &= \sum_{\substack{n, \\\review{\Pi_n > 0}}} \frac{\Delta_n\review{(\delta)}^2 \langle \unknown, u_n \rangle^2  + \sigma_n^2 \Pi_n^2 \Delta_n\review{(\delta)}}{(\sigma_n^2  \Pi_n + \Delta_n\review{(\delta)})^2} \\
    & = \sum_{\substack{n, \\ \review{\Pi_n > 0}}} \left(\frac{\Delta_n\review{(\delta)}}{\sigma_n^2  \Pi_n + \Delta_n\review{(\delta)}}\right)^2 \langle \unknown, u_n \rangle^2 + \frac{\sigma_n^2\Delta_n \review{(\delta)}}{(\sigma_n^2  + \frac{\Delta_n\review{(\delta)}}{\Pi_n})^2} \\
    &\leq \sum_{\substack{n \leq N, \\ \review{\Pi_n > 0}}} \frac{\delta^2}{\sigma_n^2  \Pi_n + \Delta_n\review{(\delta)}}\langle \unknown, u_n \rangle^2   + \sum_{\substack{n > N, \\ \review{\Pi_n > 0}}} \langle \unknown, u_n \rangle^2   +\\
    &~~~ \sum_{\substack{n \leq N, \\ \review{\Pi_n > 0}}}   \frac{\sigma_n^2 \delta^2}{\sigma_n^4} + \sum_{n > N} \Pi_n\\
 &\leq \delta^2 \left(\frac{\Vert u \Vert^2}{\review{\min_{\substack{n \leq N, \\ \Pi_n > 0}} \sigma_n^2 \Pi_n}} + \frac{N}{\sigma_N^2} \right) + 
  \sum_{n>N} (\langle \unknown, u_n \rangle^2 + \Pi_n).
%    &= \sum_n \left[ \frac{\Delta_n \langle \unknown, u_n \rangle^2}{2 \sigma_n^2 \Pi_n} + \frac{\langle \noise, v_n \rangle^2 \Pi_n}{2 \Delta_n} \right] \\
%    &= \sum_n \left[ \frac{\Delta_n \langle w, v_n \rangle^2}{2 \Pi_n} + \frac{\langle \noise, v_n \rangle^2 \Pi_n}{2 \Delta_n} \right] \, ,
\end{align*}
Now let $\epsilon > 0$ be arbitrary. Due to the summability of $\langle \unknown, u_n \rangle^2 + \Pi_n$ we can choose $N$ large enough such that
$$ \sum_{n>N} (\langle \unknown, u_n \rangle^2 + \Pi_n) < \frac{\epsilon}2.$$
Now we can choose
\review{$$ 
\delta < \sqrt{\frac{\epsilon}{2}} \left(\frac{\Vert u \Vert^2}{\min_{\substack{n \leq N, \\ \Pi_n > 0}} \sigma_n^2 \Pi_n} + \frac{N}{\sigma_N^2} \right)^{-1/2}$$}
to obtain
$$ \mathbb{E}_\nu(\Vert \unknown - \reconstructionOperator(\data; \overline{g}) \Vert^2) < \epsilon$$
for $\delta$ sufficiently small, which implies convergence to zero. 

\noindent In the same way we can estimate
\begin{align*}
    \mathbb{E}_{u,\nu}( \Vert \unknown - \reconstructionOperator(\data;\overline{g}) \Vert^2 ) \review{- \mathbb{E}_\unknown\left(\Vert u_0 \Vert^2\right)} &= \sum_n \frac{\Delta_n \Pi_n}{ \sigma_n^2 \Pi_n + \Delta_n} \, , \\
    %&= \sum_{n \leq N(\epsilon)} \frac{\Delta_n \Pi_n}{ \sigma_n^2 \Pi_n + \Delta_n} + \sum_{n > N(\epsilon)} \frac{\Delta_n \Pi_n}{ \sigma_n^2 \Pi_n + \Delta_n} \\
    %&\leq \sum_{n \leq N(\epsilon)} \frac{\Delta_n \Pi_n}{ \sigma_n^2 \Pi_n} + \sum_{n > N(\epsilon)} \frac{\Delta_n \Pi_n}{\Delta_n} \\
    &\leq \review{N} \frac{\delta^2}{\sigma_N^2} + \sum_{n > N} \Pi_n \, ,
\end{align*}
where the first equality follows from \eqref{eq:optimal-expected-error}, and obtain convergence by the same argument.  
\end{proof}

\section{Learned Radon Filters}\label{sec:learnedRadon}
In this Section we show the correspondence between data-driven regularization of the inverse Radon transform and filtered back-projection. We therefore analyze the properties of an optimal filter for the regularized filtered backprojection \eqref{eq:radonRegularizedInversion}, i.e., we want to find a function $\overline{\rho}: \mathbb{R} \times [0,\pi] \rightarrow \mathbb{C}$ such that
\begin{equation}\label{eq:learnedbackproj}
    \overline{\rho} = \text{arg}\min_\rho \mathbb{E} \review{_{\unknown, \noise}}( \Vert \unknown - \reconstructionOperator(\data;\rho) \Vert^2 ).
\end{equation}The function $\rho$ now plays the role of the learned spectral coefficients. In particular, $\rho(r, \theta) = \lvert r \rvert$ corresponds to an inversion, i.e., to $g_n = \frac{1}{\sigma_n}$, and $\rho(r, \theta) =  1$ yields the adjoint operator, which corresponds to $g_n = {\sigma_n}$.
In analogy to the derivation of the optimal coefficients in Section \ref{sec:regularization}, we first rewrite the $L^2$-error of the difference of a single $\unknown \in L^2(\mathbb{R}^2)$ and the corresponding regularized reconstruction of the measurement $\data \in L^2(\mathbb{R}^2)$ as follows (using the isometry property of the Fourier transform)
\begin{align*}
    \Vert \unknown - \reconstructionOperator(\data;\rho) \Vert^2 &= \int_{\bR^2} \vert\ft_{\text{\footnotesize 2-D}}\unknown(\xi) - \ft_{\text{\footnotesize 2-D}}\reconstructionOperator(\data;\rho)(\xi)\vert^2 \, \diff \xi\\
    &= \int_0^{\pi} \int_{\bR} \vert r\vert\,\vert\ft_{\text{\footnotesize 2-D}}\unknown(r\Vec{\theta}) - \ft_{\text{\footnotesize 2-D}}\reconstructionOperator(\data;\rho)(r\Vec{\theta})\vert^2 \,\diff r\,\diff \Vec{\theta}\\
    &= \int_0^{\pi} \int_{\bR} \vert r\vert\,\left\vert\ft_{\text{\footnotesize 1-D}}\operator\unknown(r,\theta) - \frac{\rho(r,\theta)}{\vert r \vert}\,\ft_{\text{\footnotesize 1-D}}\data(r,\theta)\right\vert^2 \,\diff r\,\diff \Vec{\theta}.
\end{align*}
In the last line, we use the central slice theorem to get
\begin{equation*}
    \ft_{\text{\footnotesize 2-D}}\unknown(r\Vec{\theta}) = \ft_{\text{\footnotesize 1-D}}\operator\unknown(r,\theta) \qquad \text{and} \qquad \ft_{\text{\footnotesize 2-D}}\reconstructionOperator(\data;\rho)(r\Vec{\theta}) =\frac{\rho(r,\theta)}{\vert r \vert}\,\ft_{\text{\footnotesize 1-D}}\data(r,\theta)
\end{equation*}
where the latter equality holds since the range of the Radon transform is dense in $L^2(\mathbb{R} \times [0,\pi])$ and thus
\begin{equation*}
    A\reconstructionOperator(\data;\rho) = \ift_{\text{\footnotesize 1-D}}\left( \frac{\rho}{\vert \cdot \vert}\, \cdot \ft_{\text{\footnotesize 1-D}}\data\right).
\end{equation*}
Note, that for the reconstruction operator to be well-defined and continuous on $L^2$, the filter function $\rho$ has to be essentially bounded.
With the same arguments used in Section \ref{sec:regularization} we derive
\begin{align*}
   \mathbb{E}( \Vert \unknown - \reconstructionOperator(\data;\rho) \Vert^2 ) = \int_0^{\pi} \int_{\bR} \lvert r\rvert \left(\lvert 1-\psi(r,\theta)\rvert^2 \, \Pi(r, \theta) + \lvert\psi(r,\theta)\rvert^2 \, \Delta(r,\theta)\right)\diff r \diff\theta.
\end{align*}
with $\psi(r,\theta) = \frac{\rho(r,\theta)}{\vert r \vert}$ and
\begin{equation*}
    \Pi(r, \theta) = \mathbb{E}\left( \left\lvert\ft_{\text{\footnotesize 1-D}} Au(r,\theta)\right\rvert^2\right) \qquad \text{and} \qquad \Delta(r, \theta) = \mathbb{E}\left(\left\lvert\ft_{\text{\footnotesize 1-D}} \noise(r,\theta)\right\rvert^2\right).
\end{equation*}
By the above formula we can restrict $\psi$ and thus $\rho$ to be real-valued, as adding an imaginary component would always increase the expected value. Accordingly, the optimal filter is real-valued and given by 
\begin{equation*}
    \overline{\rho}(r,\theta) = \vert r \vert\,\frac{\Pi(r,\theta)}{\Pi(r,\theta) + \Delta(r, \theta)}.
\end{equation*}
To fulfill the boundedness of $\overline{\rho}$ we assume that
\begin{equation*}
    \frac{\Delta(r,\theta)}{\Pi(r,\theta) \lvert r\rvert} \longrightarrow \infty, 
\end{equation*}
 as $\lvert r\rvert \longrightarrow \infty$ for every $\theta \in [0,\pi]$, which can be recognized as the obvious conditions of ground-truth data being smoother than noise on average. The expected error using the optimal filter is then given by 
\begin{equation*}
    \mathbb{E}( \Vert \unknown - \reconstructionOperator(\data;\overline{\rho}) \Vert^2 )= \int_0^{\pi} \int_{\bR} \lvert r\rvert\, \Pi(r, \theta) \left(1- \frac{\Pi(r, \theta)}{\Pi(r, \theta) +  \Delta(r,\theta)}\right) \diff r \diff\theta.
\end{equation*}
Using the central slice theorem again we can further determine the expected smoothness $\Tilde{\Pi}(r,\theta) := \mathbb{E}\left( \left\lvert\ft_{\text{\footnotesize 1-D}}A\reconstructionOperator(\data;\overline{\rho})(r,\theta)\right\rvert^2\right) $ of the reconstructions as
\begin{align*}
    \Tilde{\Pi}(r,\theta) = \left(\frac{\overline{\rho}(r,\theta)}{\vert r \vert}\right)^2\,\mathbb{E}\left( \left\lvert\ft_{\text{\footnotesize 1-D}} \data(r,\theta)\right\rvert^2\right)
    = \frac{\Pi(r,\theta)}{\Pi(r,\theta) + \Delta(r,\theta)} \Pi(r,\theta).
\end{align*} This reveals the analogue to the smoothing effect obtained by the spectral reconstruction that was discussed in Section \ref{sec:rangecon}.

\begin{remark}
Similar to the spectral regularization, optimal filters can also be computed for a finite set of training data. \review{Rewriting the empirical risk for input-output pairs $\{\unknown^i, f^{\delta,i}\}_{i = 1}^N$ as
\begin{align*}
   \frac{1}{N} \sum_{i = 1}^N \Vert \unknown - \reconstructionOperator(f^{\delta,i};\rho) \Vert^2 ) =\int_0^{\pi} \int_{\bR} \lvert r\rvert \left(\lvert 1-\psi(r,\theta)\rvert^2 \, \Pi^N(r, \theta) + \lvert\psi(r,\theta)\rvert^2 \, \Delta^N(r,\theta) \right.&\\\left. - 2(1-\psi(r,\theta))\psi(r,\theta)\Gamma^N \right)\diff r \diff\theta&.
\end{align*}
and computing the optimality conditions for $\psi$ yields}
\begin{equation}\label{eq:empiricalMiniFourier}
    \overline{\psi}^N = \frac{\Pi^N\review{+}\Gamma^N}{\Pi^N+\Delta^N+2\Gamma^N},
\end{equation}
with 
\begin{equation*}
    \Pi^N(r,\theta) = \frac{1}{N} \sum_{i = 1}^N\vert \ft_{\text{\footnotesize 1-D}} A\unknown_i(r,\theta) \vert^2, \qquad  \Delta^N(r,\theta) = \frac{1}{N} \sum_{i = 1}^N\vert \ft_{\text{\footnotesize 1-D}
    } \noise_i(r,\theta) \vert^2
\end{equation*}
and
\begin{equation*}
    \Gamma^N(r,\theta) = \frac{1}{\review{2}N} \sum_{i = 1}^N \left(\ft_{\text{\footnotesize 1-D}} A\unknown_i(r,\theta)\,\overline{\ft_{\text{\footnotesize 1-D}}\noise_i(r, \theta)}  + \overline{\ft_{\text{\footnotesize 1-D}} A\unknown_i(r,\theta)}\,\ft_{\text{\footnotesize 1-D}}\noise_i(r, \theta)\right).
\end{equation*}
\end{remark}

\section{Numerical Experiments with the discretized Radon Transform}\label{sec:numerics}

In this Section we support the theoretical results of the previous Sections with numerical experiments\footnote{Our code is available at\\\url{https://github.com/AlexanderAuras/convergent-data-driven-regularizations-for-ct-reconstruction}.}.

\subsection{Dataset\review{s}}
The following experiments are performed on a \review{synthetic dataset and the LoDoPaB-CT dataset (cf. \cite{Leuschner_2021}). The synthetic dataset contains} $32,000$ images of random ellipses \review{supported on a circle contained completely in the image domain (see Figure \ref{fig:reconstructions} for an example)}. Each ellipse has a random width and height, and is roto-translated by a random distance and angle. All ellipses are completely contained within the image borders, and there may be an overlapping between two or more ellipses.  We split the data into $64\%$ training, $16\%$ validation and $20\%$ test data. \review{The LoDoPaB-CT dataset contains over $40,000$ human chest scans and is split into $35,820$ training, $3,522$ validation, and $3,553$ test images.}
\review{For both datasets, we simulate sinograms as described in the following section and model noisy data by adding Gaussian noise with zero mean and variance $\delta^2$. In Appendix \ref{app:uni} we additionally report results for uniformly distributed noise.} 
\subsection{Discretization and Optimization}
\begin{figure}
    \centering
    \subfloat{\includegraphics[scale = 0.23]{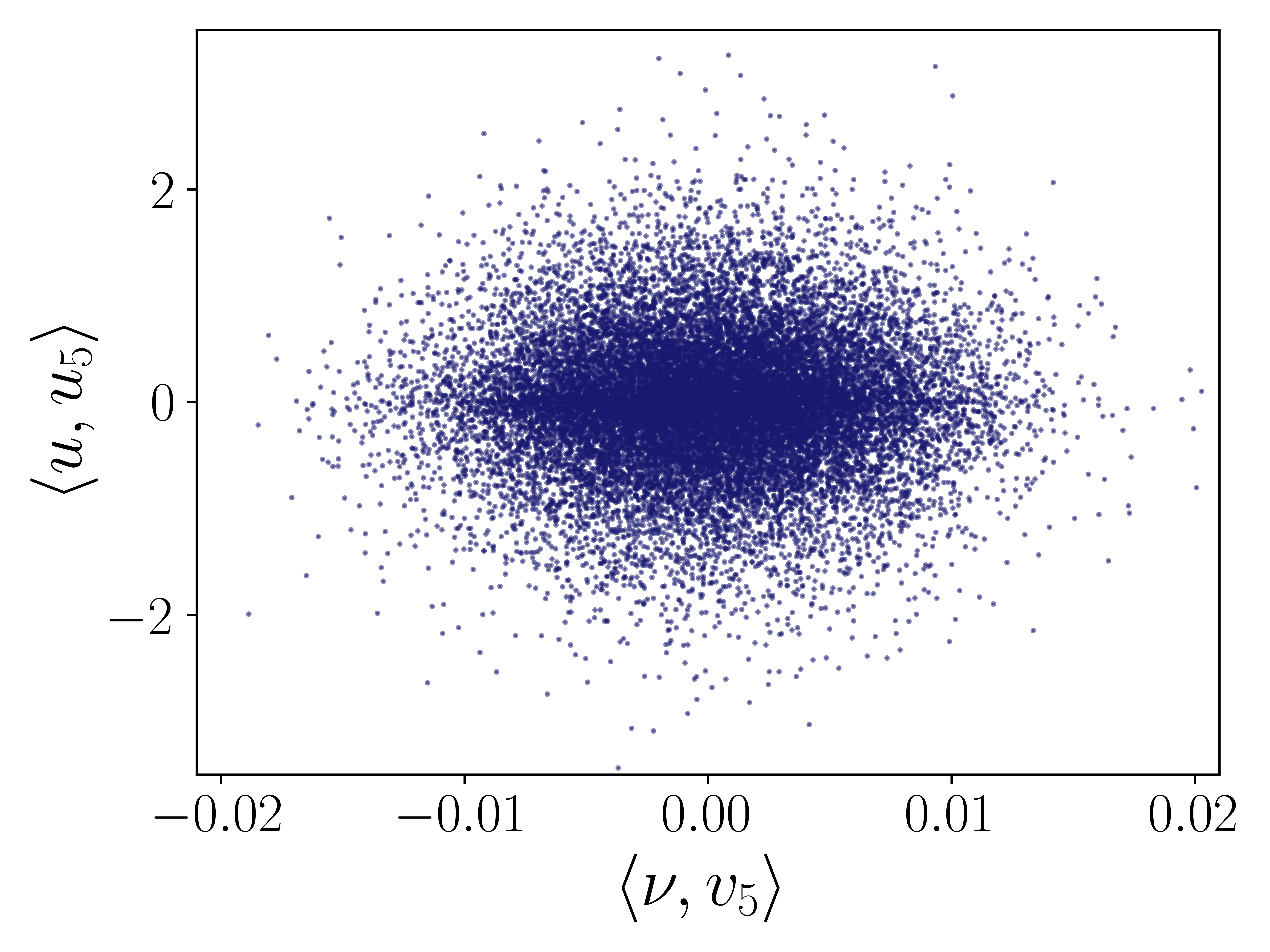}} \,\subfloat{\includegraphics[scale = 0.23]{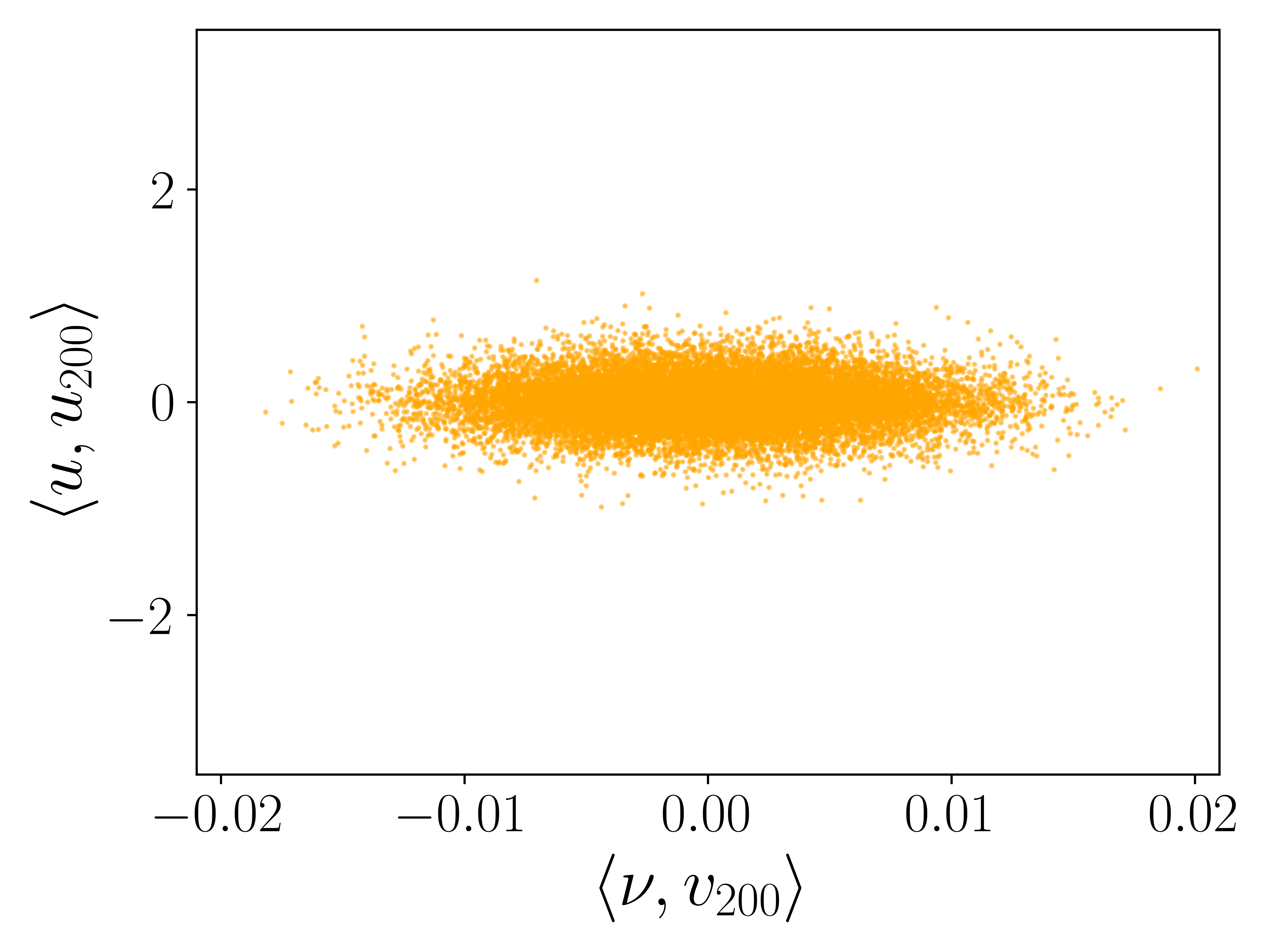}} \,\subfloat{\includegraphics[scale = 0.23]{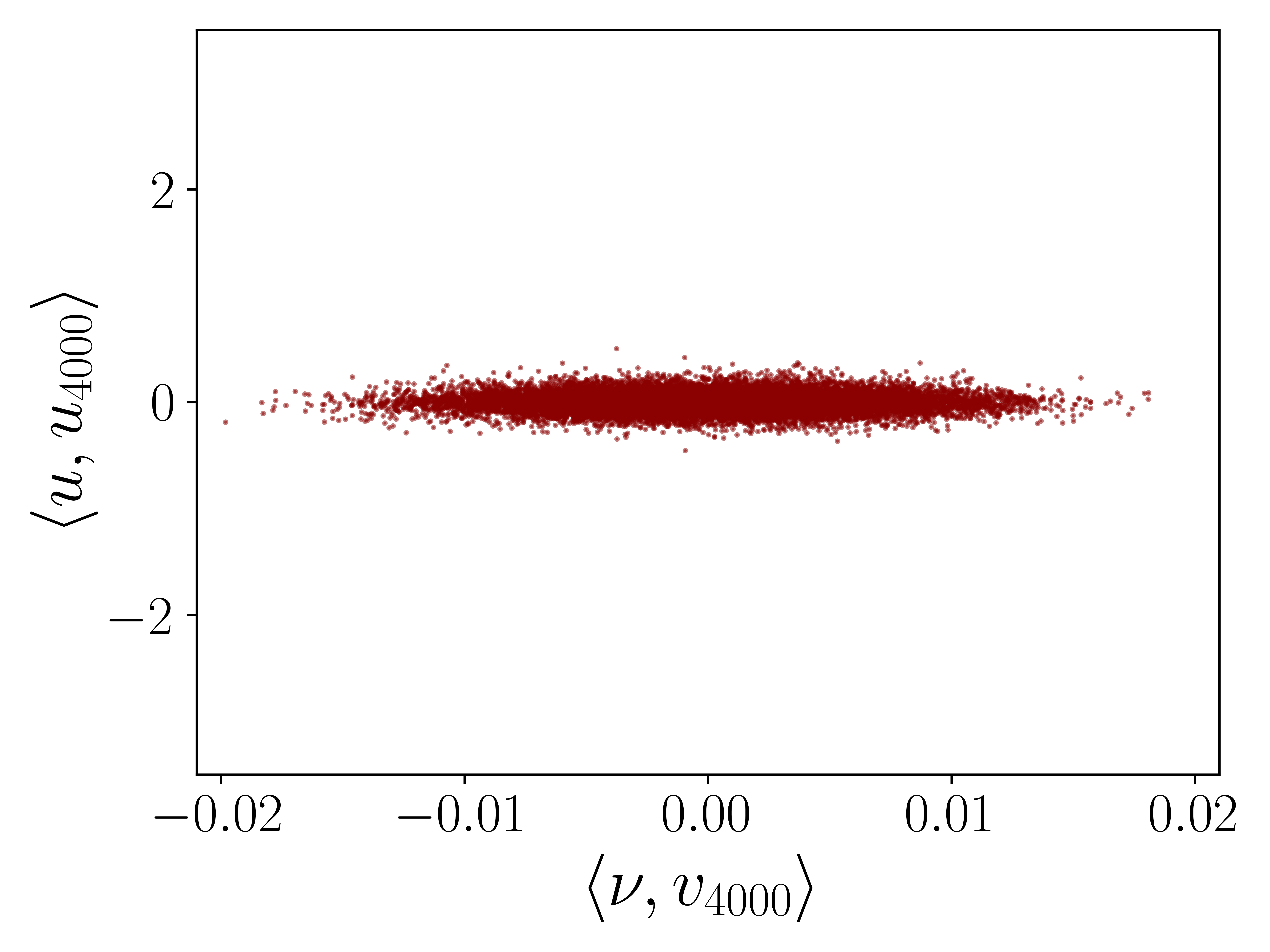}}
    \caption{Comparison of coefficients $\langle u, u_n\rangle$ and $\langle \nu, v_n\rangle$ with $n= 5,\,200,\,4000$ for $64 \times 64$ ground truth images, $93 \times 256$ sinograms and noise level $\review{\delta} = 0.005$. While the variance of the data-coefficients decreases with increasing $n$, the variance of the noise-coefficients is stable.}
    \label{fig:smoothdata}
\end{figure}
We perform numerical experiments with the Radon transform for discrete data \review{based on the Spline-0-Pixel-Model  where we assume the image functions $u\in L^2(\mathbb{R}^2)$ to be of the form 
\begin{align*}
 u(x,y) = \sum_{i,j = 0}^{I-1} u^{(ij)} \,\beta_0\left(I\left(x-\frac{i}{I}\right)\right)\,\beta_0\left(I\left(x-\frac{j}{I}\right)\right) \quad \text{for a.e.} (x,y) \in \mathbb{R}^2,\end{align*}
where $I \in \mathbb{N}$ denotes the number of pixels in each direction, $u_{ij} \in \mathbb{R}$ for each $i,j < I$ and the 0-spline $\beta_0$ is defined as
\begin{align*}
    \beta_0(t) = \begin{cases}
        1 & \text{if } \lvert t \rvert \leq \frac{1}{2},\\
        0 & \text{otherwise.}
    \end{cases}
\end{align*} Accordingly we have $u^{(ij)} = u\left(\frac{i}{I}, \frac{j}{I}\right)$, so with a slight abuse of notation we write $u \in \mathbb{R}^{I\times I}$. Reformulating the Radon transform for this particular choice of input functions yields a finite linear operator (see e.g., \cite{Guedon04} for a detailed derivation)} and can thus be represented by a matrix $A$. For sufficiently small dimensions, the finite set of vectors $u_n$ and $v_n$ is given by the singular value decomposition of $A$, more precisely $A = V\Sigma U^T$, where $u_n$ and $v_n$ are the column vectors of the orthogonal matrices $U$ and $V$ and $\Sigma$ is a diagonal matrix containing the singular values in non-increasing order. Since the considered problem has finite dimensions, Assumption \ref{ass:signalnoiseb} is always fulfilled. We still see a faster decay of $\Pi^N$ compared to $\Delta^N$ as is depicted in Figure \ref{fig:smoothdata}.

Since the computation of the SVD has high memory requirements and the matrix multiplication with $U$ and $V^T$ is computationally expensive, this approach is only feasible for low resolution data. Using the fact that $V$ is orthogonal, we see the connection to the continuous optimization \eqref{eq:learnedbackproj} by rewriting the regularized inversion operator as 
\begin{equation}\label{eq:svdfilter}
        R(\data;\overline{g}^N) = A^T\,\left( V \,\frac{\overline{g}^N}{\text{diag}(\Sigma)} \,V^T\right)\data,
\end{equation} 
where the division is to be interpreted element-wise, and where $\text{diag}(\Sigma)$ returns the main diagonal of $\Sigma$. 

As we will analyze in more detail below, the discretization of the Radon transform leads to \eqref{eq:radonInversion} being incorrect for $\mathcal{F}$ denoting the discrete Fourier transform. Thus, we will compare four different approaches in our numerical results
\begin{enumerate}
    \item The spectral regularization \eqref{eq:reconstructionSVD} with coefficients computed according to the analytical solution \eqref{eq:empOptSVD}.
    \item The spectral regularization \eqref{eq:reconstructionSVD} with coefficients optimized in a standard machine learning setting, i.e., initializing the parameters with zeros, partitioning the data into batches of $32$ instances and then using the PyTorch implementation of Adam \review{(with default parameters except for a learning rate of $0.1$)} for optimization. Ideally, these results should be identical to 1). 
    \item The learned filtered back-projection \eqref{eq:radonRegularizedInversion} with coefficients computed according to the analytical solution \eqref{eq:empiricalMiniFourier}, knowing that the resulting coefficient might not be optimal due to the aforementioned discretization error in \eqref{eq:radonRegularizedInversion}. To partially compensate for this, we replaced the discretization of the ramp filter $\vert r\vert$ by filter coefficients we optimized on clean data. For all non-zero noise levels we kept these pseudo-ramp filter coefficients from the no-noise baseline. 
    \item The learned filtered back-projection \eqref{eq:radonRegularizedInversion} with coefficients that optimize 
    \begin{equation*}
    \overline{\rho}^N = \text{arg}\min_{\rho} \frac{1}N \sum_{i=1}^N \Vert u^i -  A^T\left(F^{-1}  \overline{\rho}^N F\right)f^i\Vert^2,
\end{equation*}
where $F$ denotes the discrete Fourier transform. In keeping with the results from Section \ref{sec:learnedRadon}, we only consider real-valued $\sigma$. The optimization is done in the same way as described above for the SVD. We point out that \review{for rotatation invariant distributions, i.e., cases where the noise and data distributions are constant in the direction of the angles,} the filter $\rho$ can be chosen constant in the direction of the angles \review{as well}. By this the number of necessary parameters is equal to the number of positions $s$. We further note that due to the inevitable discretization errors, we cannot expect perfect reconstructions, not even for perfect measurements without noise. 
\end{enumerate}
We refer to Approaches 2. and 4. as \textit{learned} and to Approaches 1. and 3. as \textit{analytic} and compare their behavior below. Although the discrete Radon transform $A$ is an approximation of the continuous Radon transform (denoted by $\hat{A}$ in the following), several scaling factors have to be taken into account to relate the discrete regularization to a continuous regularization. We collect these factors in the following remark.
\begin{remark}\label{rem:scaling}
     For data supported on the uniform square, we consider discretizations $u \in \bR^{I\times I}$ and $v \in \bR^{K\times L}$ of functions $\hat{u}$ and $\hat{v}$. Here $I$ denotes the number of pixels in each direction, $K$ denotes the number of equispaced angles in $[0,\pi]$ and $L$ denotes the number of equispaced positions in $\left[-\frac{\sqrt{2}}{2},\frac{\sqrt{2}}{2}\right]$. Then, the following approximations hold for
    \begin{itemize}\small\normalfont
        \item[(i)] the Radon transform and its adjoint: \begin{equation*}
            \hat{A}\hat{u} \approx Au, \quad\text{and}\quad \hat{A}^*\hat{v} \approx \frac{\sqrt{2}\pi}{KL} A^T v,
        \end{equation*}
        \item[(ii)] the singular value decomposition:
        \begin{equation*}
            \hat{\sigma}_n \approx  \sqrt{\frac{\sqrt{2}\pi}{KL}}\,\sigma_n,\qquad \hat{u}_n \approx I \,u_n, \quad\text{and}\quad \hat{v}_n \approx \sqrt{\frac{KL}{\sqrt{2}\pi}}\, v_n,
        \end{equation*}
        where $\hat{u}_n$, $\hat{v}_n$ are normalized with respect to the $L^2$-norm, and $u_n$, $v_n$ are normalized with respect to the Euclidean norm,
        \item[(iii)] the data-driven (co-)variance terms:
        \begin{equation*}
            \hat{\Pi}_n = \frac{\Pi_n}{I^2},\qquad \hat{\Delta}_n = \frac{\sqrt{2}\pi}{KL}\,\Delta_n\quad\text{and}\quad \hat{\Gamma}_n =  \sqrt{\frac{\sqrt{2}\pi}{KL}}\,\frac{\Gamma_n}{I}.
        \end{equation*}
    \end{itemize}
\end{remark}
\subsection{Results}
In the following, we present several numerical results for the approaches introduced in the previous Section.
\subsubsection*{Learned vs. analytic}
\begin{table}[]
    \centering
    \begin{tabular}{c!{\vrule width0.7pt}c|c|c|c}
        \textbf{Loss (MSE)} & $\review{\delta} = 0$ & $\review{\delta} = 0.005$ & $\review{\delta} = 0.01$ & $\review{\delta} = 0.015$ \\
        \Xhline{0.7pt}
        SVD analytic & $6.5\cdot10^{-12}$ & $6.7\cdot10^{-4}$& $1.2\cdot10^{-3}$& $1.7\cdot10^{-3}$ \\
        \hline
        SVD learned & $6.6\cdot10^{-6}$ & $6.7 \cdot10^{-4}$ &$1.2\cdot10^{-3}$ & $1.7\cdot10^{-3}$\\
        \hline
        FFT learned & $6.2\cdot10^{-4}$ & $8.7\cdot10^{-4}$ & $1.3 \cdot10^{-3}$ & $1.7\cdot10^{-3}$\\
        \hline
        FFT analytic & $6.1\cdot10^{-4}$ &$1.1\cdot10^{-3}$ & $1.9\cdot10^{-3}$& $2.5\cdot10^{-3}$\\
        \multicolumn{5}{c}{}\\
        \multicolumn{5}{c}{}\\
        \textbf{PSNR} & $\review{\delta} = 0$ & $\review{\delta} = 0.005$ & $\review{\delta} = 0.01$ & $\review{\delta} = 0.015$ \\
         \Xhline{0.7pt}
         SVD analytic& $111.9$ & $31.75$ & $29.09$ & $27.77$ \\
         \hline
         SVD learned & $51.83$ & $31.74$ & $29.94$ & $27.71$ \\
         \hline
         FFT learned & $32.1$ & $30.6$ & $28.83$ & $27.67$ \\
         \hline
         FFT analytic & $32.17$ & $29.65$ & $27.19$ & $26$\\
        \multicolumn{5}{c}{}\\
        \multicolumn{5}{c}{}\\
         \textbf{SSIM} & $\review{\delta} = 0$ & $\review{\delta}= 0.005$ & $\review{\delta} = 0.01$ & $\review{\delta} = 0.015$ \\
         \Xhline{0.7pt}
         SVD analytic & $1$ & $0.832$ & $0.7443$ & $0.6846$\\
         \hline
         SVD learned & $0.999$ & $0.816$ & $0.731$ & $0.683$\\
         \hline
         FFT learned & $0.992$ & $0.837$ & $0.747$ & $0.683$\\
         \hline
         FFT analytic & $0.923$ & $0.901$ & $0.811$ & $0.729$
         
    \end{tabular}

    \caption{Performance of the different approaches on test data for $64 \times 64$ ground truth images and $93 \times 256$ sinograms and different noise levels $\review{\delta}$.}
    \label{tab:results}
\end{table}
\begin{figure}
    \centering
    \subfloat{\includegraphics[scale = 0.35]{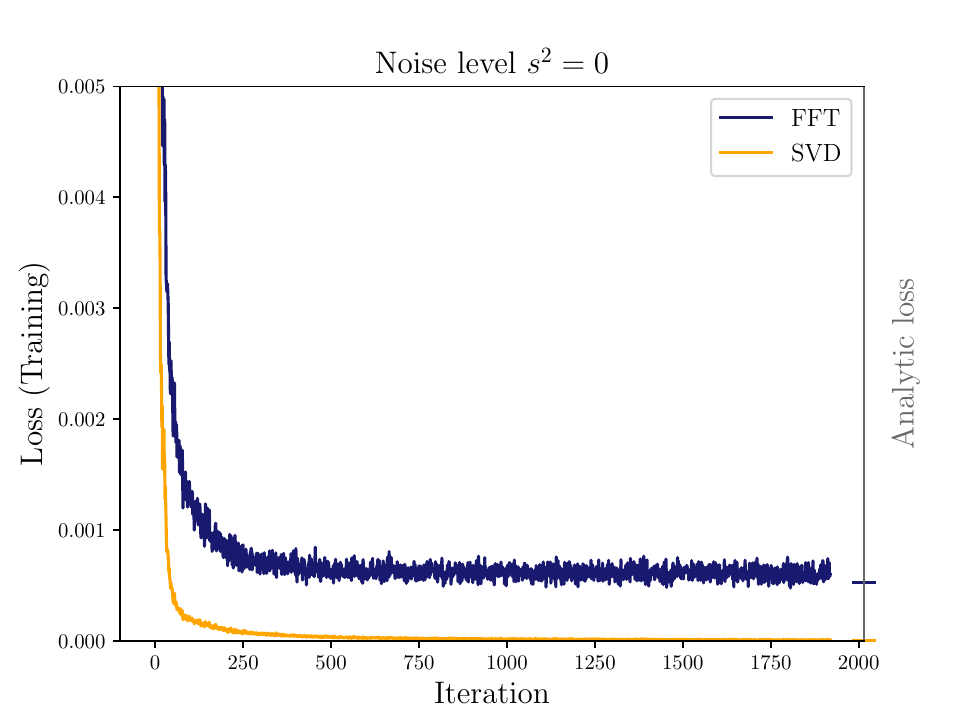}} \subfloat{\includegraphics[scale = 0.35]{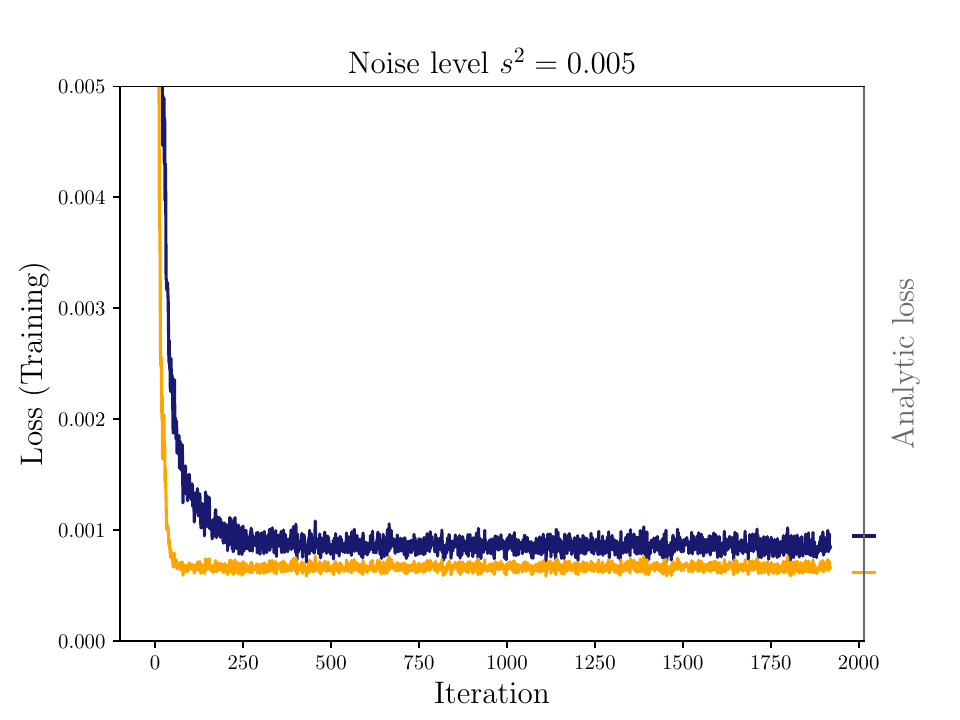}}\\
    \subfloat{\includegraphics[scale = 0.35]{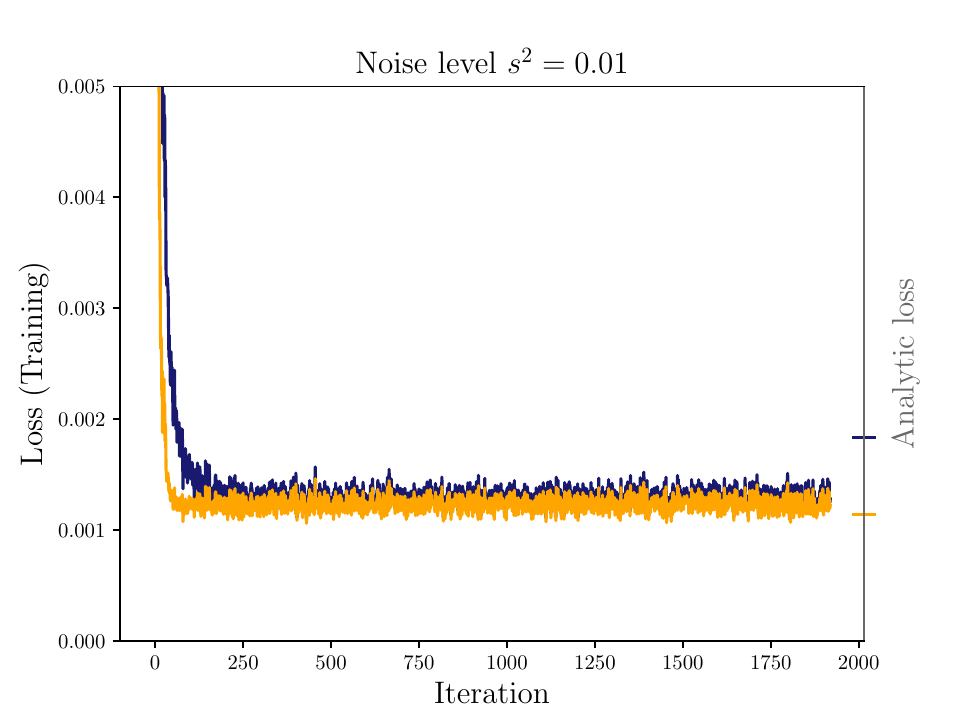}} \subfloat{\includegraphics[scale = 0.35]{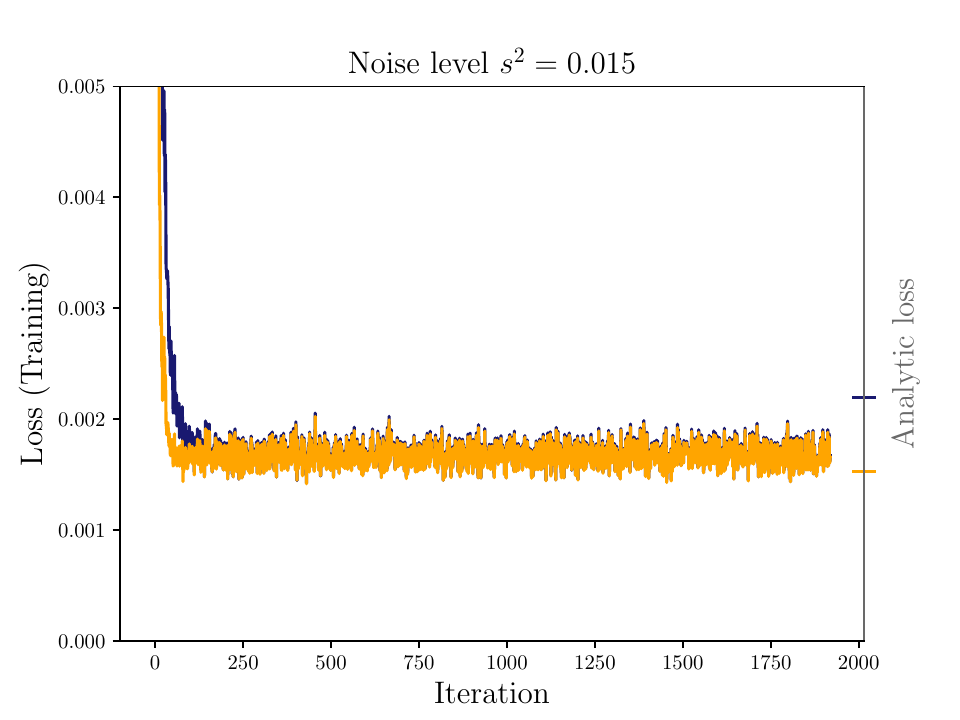}}
    \caption{Training loss curves for both optimization approaches and different noise levels $\review{\delta}$ for $64 \times 64$ ground truth images and $93 \times 256$ sinograms. The scale on the right shows the loss obtained by the analytically computed filters.}
    \label{fig:loss}
\end{figure}
The loss as well as the PSNR- and SSIM-values obtained by the different approaches on the test dataset are documented in Table \ref{tab:results}. Based on these results, we deduce that the SVD-based regularization performs better than the FFT-based regularization. In the SVD case we see that the results obtained by the analytic coefficients are still slightly better than the ones obtained by the learned coefficients. The opposite is true in the FFT case, where the analytic solution performs worse than the learned one. This had to be expected by the discretization errors that are made in the computation of the analytic optimum. Figure \ref{fig:loss} shows the loss curves for both data-driven optimization approaches as well as the loss obtained by the analytically optimal filters. As expected for the SVD-approach the learned loss converges to the analytic loss. On the other hand, the loss approximated with the analytic formula for the Fast-Fourier-approach is higher than the actual data-driven loss, which may again indicate that the optimization balances out the discretization errors. This gap between the continuous and discrete optima can also be seen in higher resolution data, as the comparison of the analytic filters to the learned filters in Figure \ref{fig:filters} shows. Based on these insights, we only show the results for analytic SVD-coefficients and learned FFT-filters in the following.
\begin{figure}
    \centering
    \subfloat{\includegraphics[scale = 0.35]{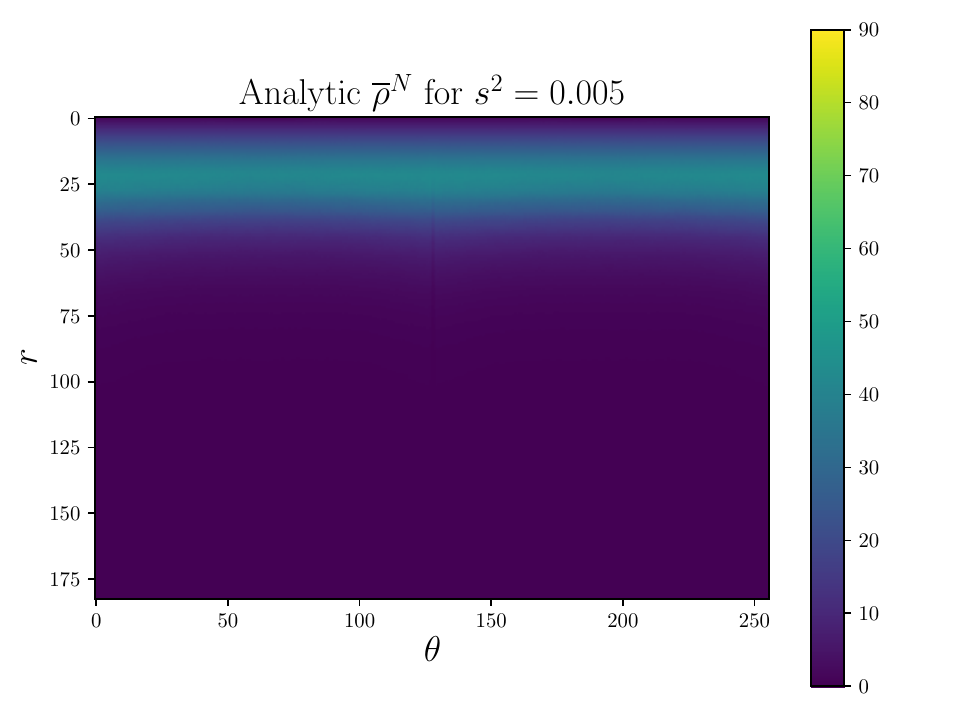}} \subfloat{\includegraphics[scale = 0.35]{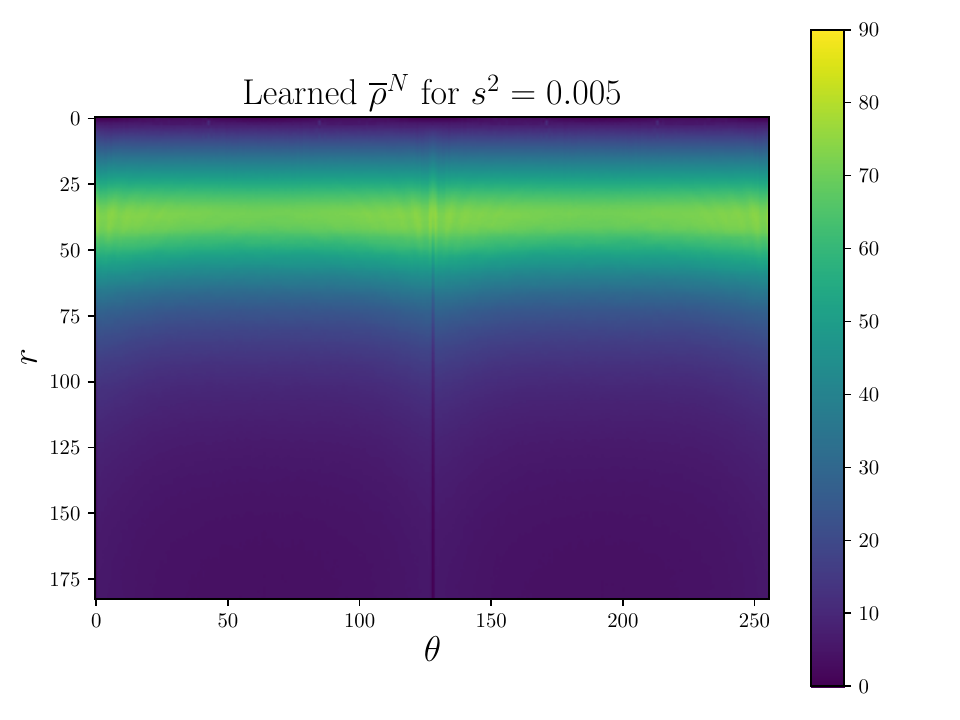}}\\
    \subfloat{\includegraphics[scale = 0.35]{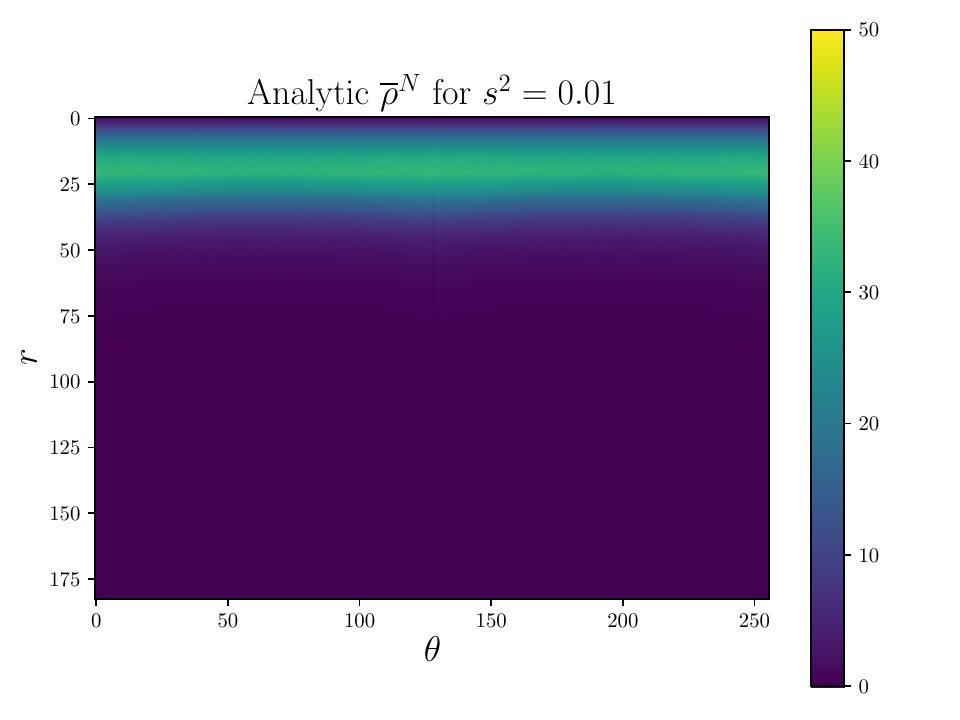}} \subfloat{\includegraphics[scale = 0.35]{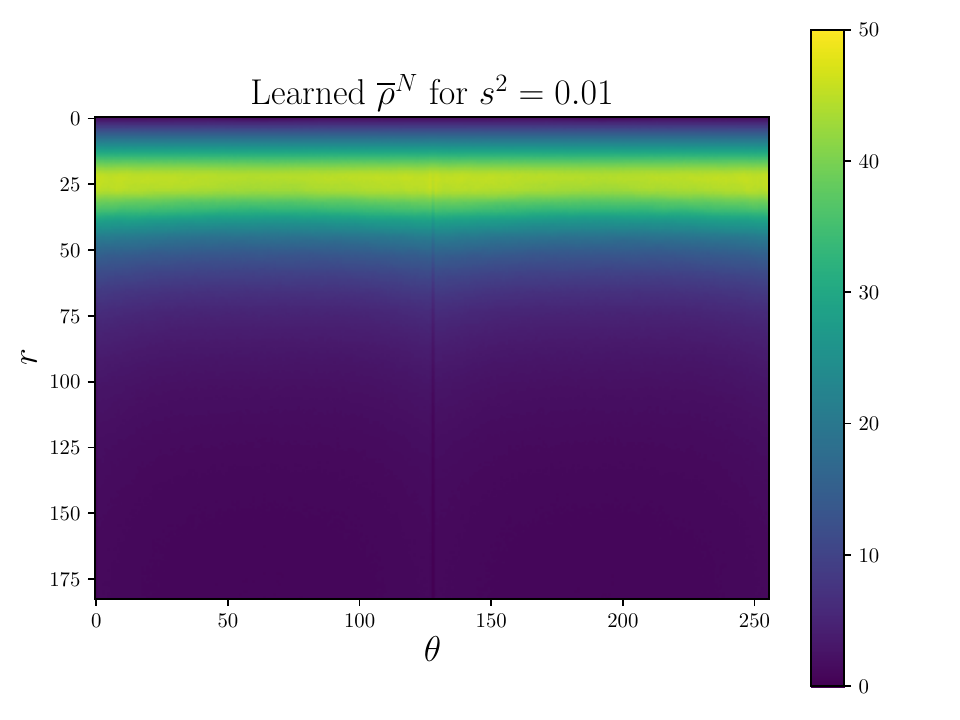}}
    \caption{Comparison of analytic and learned Fast-Fourier-Filters for $256 \times 256$ ground truth images and $365\times 256$ sinograms (values for negative $r$ are omited due to symmetry). }
    \label{fig:filters}
\end{figure}
\begin{figure}
    \centering
    \includegraphics[scale = 0.4]{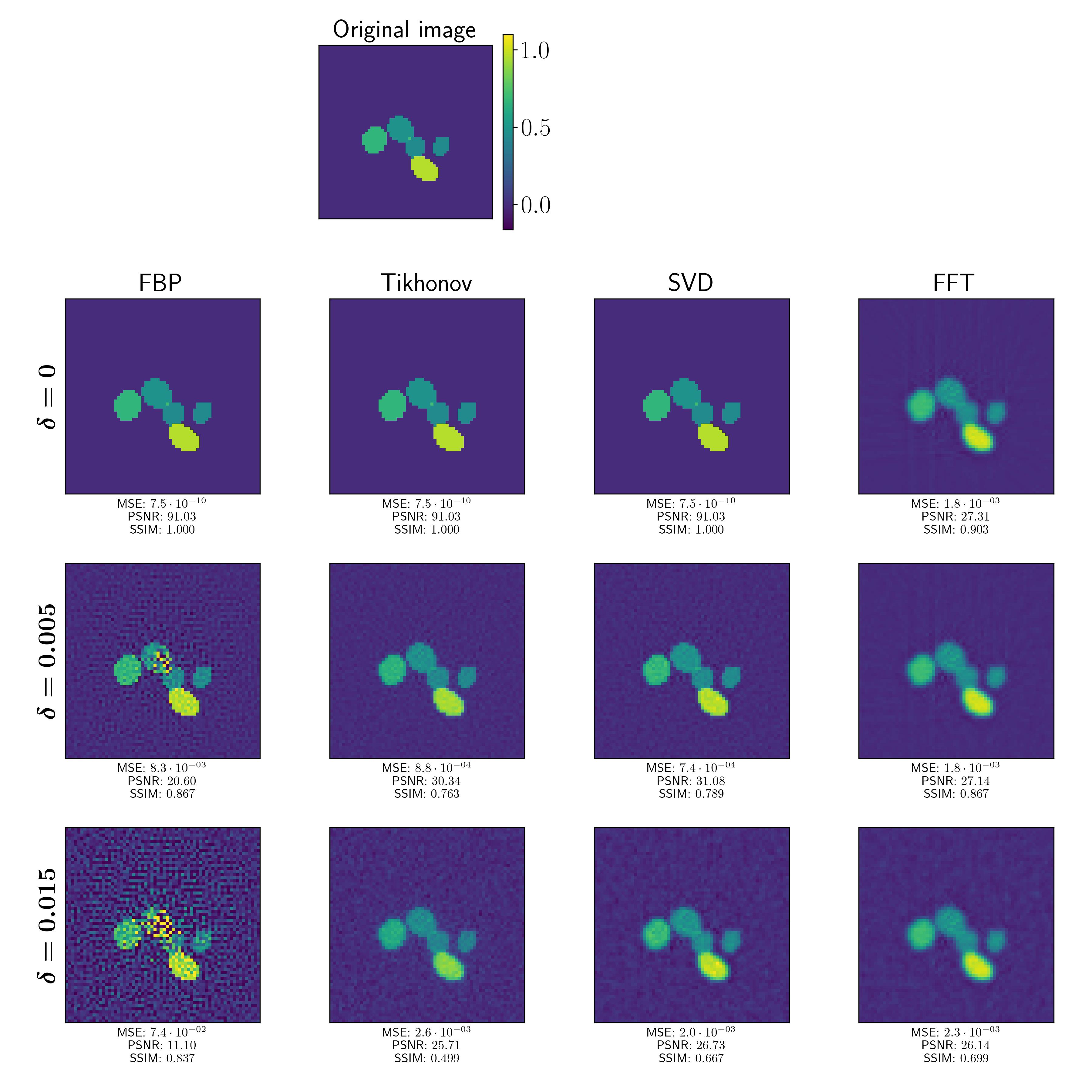}\\
    \caption{Comparison of the ellipse reconstructions obtained by different approaches for different levels of additive gaussian noise. \review{The first row ($\delta = 0$) shows the reconstructed sinograms before adding noise. }}
    \label{fig:reconstructions}
\end{figure}
We further see that the obtained losses of both approaches are more similar the higher the noise level is which is also visible in the reconstructions (see Figure \ref{fig:reconstructions}). \review{Here, we compare our two approaches to na\"ive reconstruction with filtered back-projection (FBP) and Tikhonov reconstruction, where the reconstruction operator is chosen individually for each example with the discrepancy principle.} The reconstruction examples also reveal the predicted oversmoothing effect for both approaches.
\subsubsection*{Generalization to different resolutions}
 Since the operator we use in our experiments is discretized, the optimal coefficients (or filters, respectively) depend on the resolution of the operator and the corresponding data. However, both approaches offer ways to transfer coefficients optimized for a certain resolution to another resolution. One approach to generalize the learned SVD-based coefficients would be to resort to the classic linear regularization formulation \eqref{eq:reconstructionSVD} and retrieve a function $g(\sigma)$ by inter-/extrapolating the optimal values $g(\sigma_n) = \overline{g}_n$. A generalization of the FFT-filters can be obtained in a similar way by interpolation of $\overline{\rho}(\theta, r)$ in the direction of the angles $\theta$ and extrapolation in the direction of the frequencies $r$. Comparing the optimal coefficients obtained for different resolutions after rescaling them in accordance to Remark \ref{rem:scaling} (see Figure \ref{fig:resolutionssvd} for SVD-coefficients and Figure \ref{fig:resolutionsfft} for FFT-filters) we see that all curves behave similarly but there are obvious differences in the magnitude and location of the peak for different resolutions. While the optimal coefficients seem to converge in the domain of small singular values (or low frequencies, respectively), up to the resolutions we were able to compute, there is no obvious convergence visible for the domain of large singular values in the SVD-approach or the domain of high frequencies in the FFT-approach. Note that since high singular values correspond to low frequencies, this reveals an opposite behavior of both approaches. Inspecting the curves for $\hat{\Pi}$ that they are almost identical for high resolutions (see Figure \ref{fig:resolutionssvd}). Therefore, a convergence 
 of the optimal spectral coefficients for higher resolutions seems probable.
\begin{figure}
    \centering
    \subfloat{\includegraphics[scale=0.3]{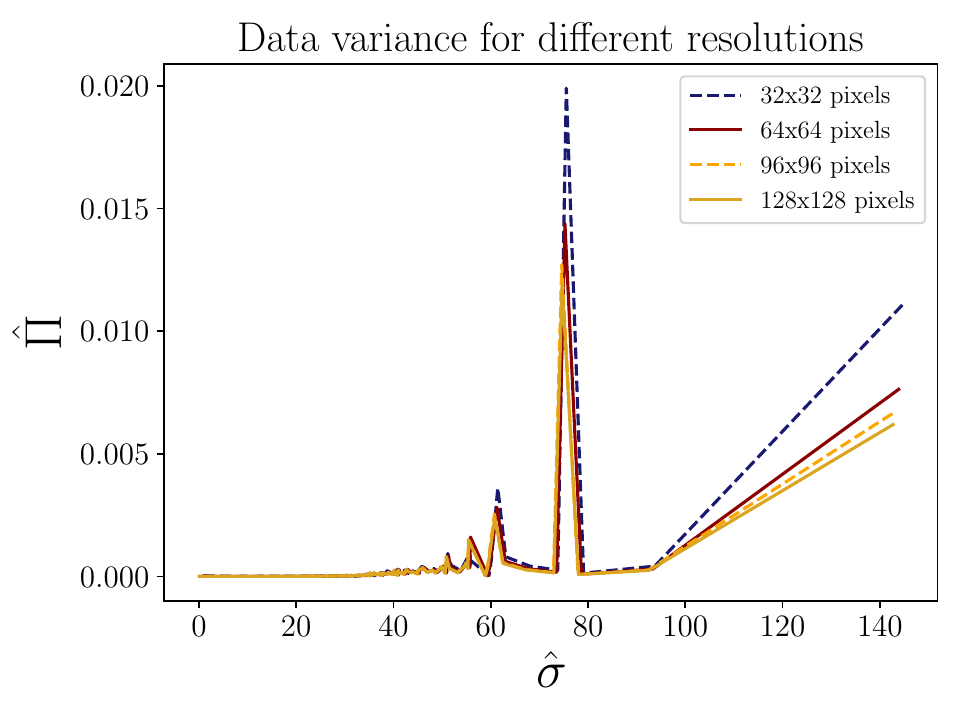}}
    \subfloat{\includegraphics[scale=0.3]{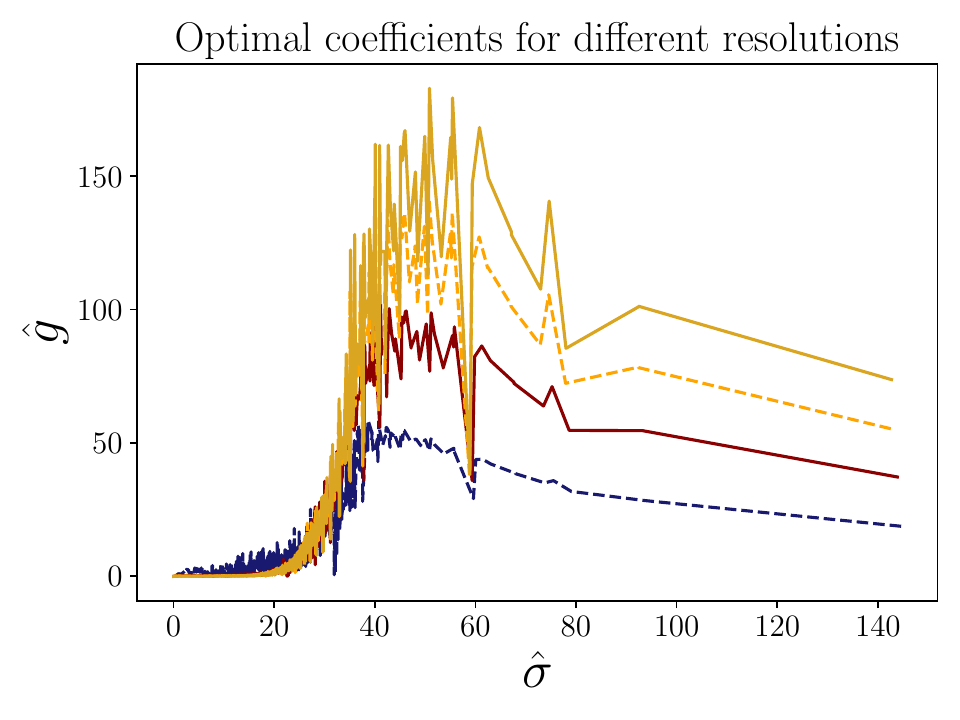}}\\
    \caption{ Comparison of the optimal spectral coefficients for different resolutions of input data noise level $\review{\delta} = 0.005$. All sinograms had a spatial resolution of one ray per pixel and an angular resolution of $256$ angles.}
    \label{fig:resolutionssvd}
\end{figure}
\begin{figure}
    \centering
    \subfloat{\includegraphics[scale = 0.3]{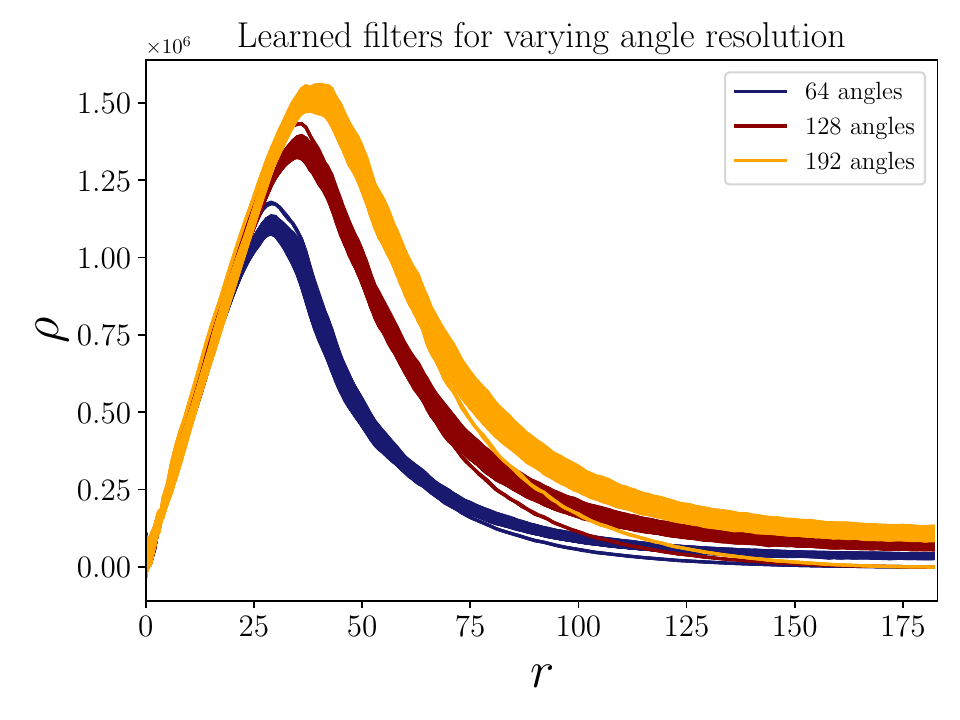}}\; % \subfloat{\includegraphics[scale = 0.3]{angle_norm.pdf}}\\
    \subfloat{\includegraphics[scale = 0.3]{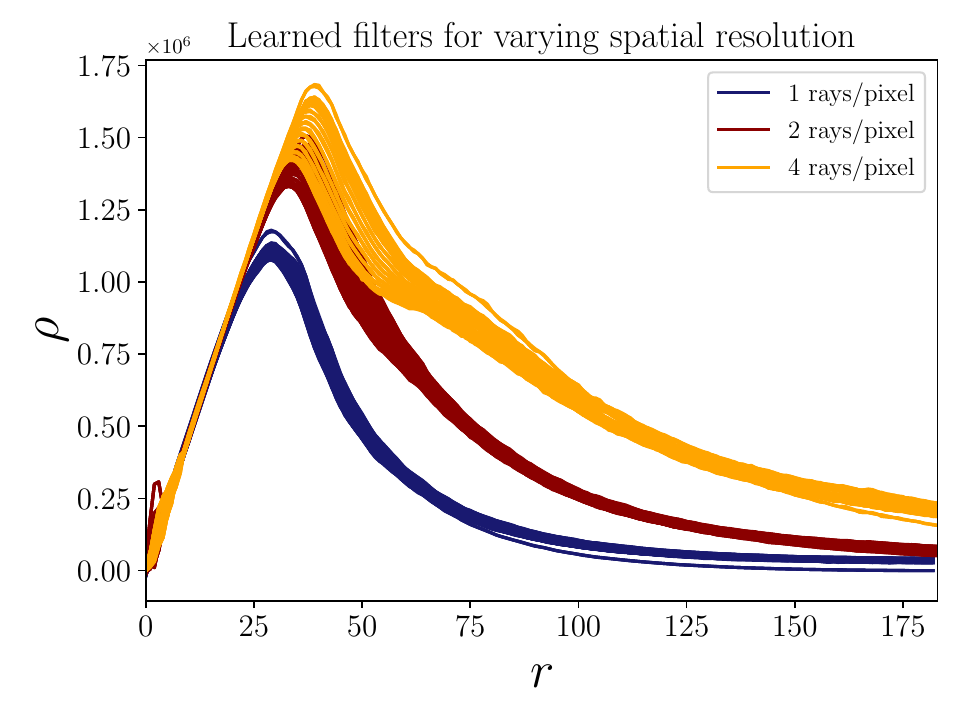}}%\; \subfloat{\includegraphics[scale = 0.3]{spatial_norm.pdf}}\\
     \caption{Comparison of the optimal FFT-filters for different angle and frequency resolutions for $256\times256$ ground truth images with noise level $\review{\delta} = 0.005$. Since the filters are not constant in the direction of the angles, multiple, slightly different curves correspond to one filter. The filters for higher spatial resolutions are cut off at frequency $r = 183$.}
    \label{fig:resolutionsfft}
\end{figure}

\review{\subsection*{Generalization to different data distributions}
\begin{figure}
    \centering
    \includegraphics[scale = 0.4]{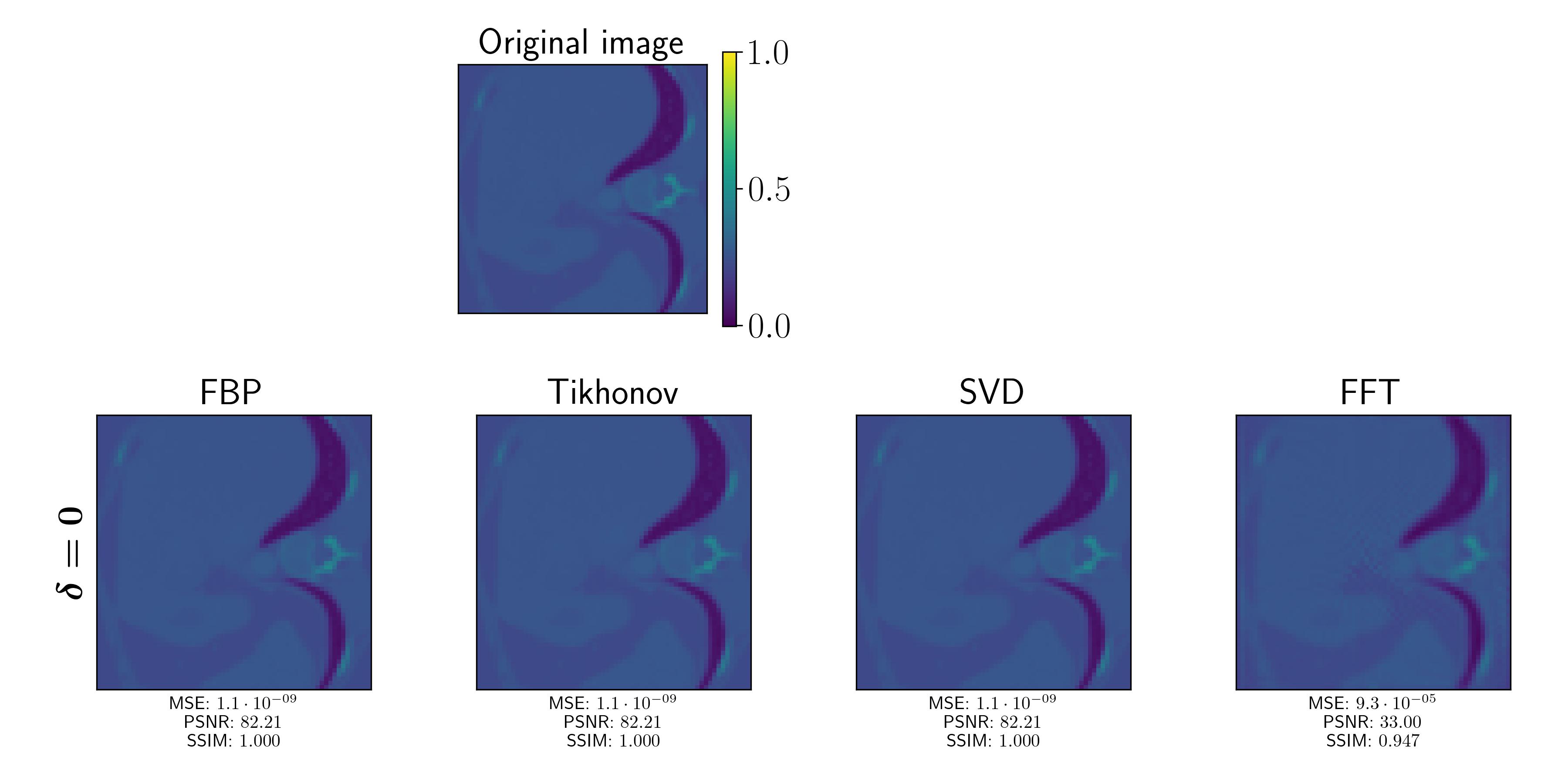}\\
    \caption{\review{Example image from the LoDoPaB-CT dataset and the reconstructions of an uncorrupted sinogram by the different methods.}}
    \label{fig:reconstructions-lodopab}
\end{figure}
\begin{figure}
    \centering
    \includegraphics[scale = 0.4]{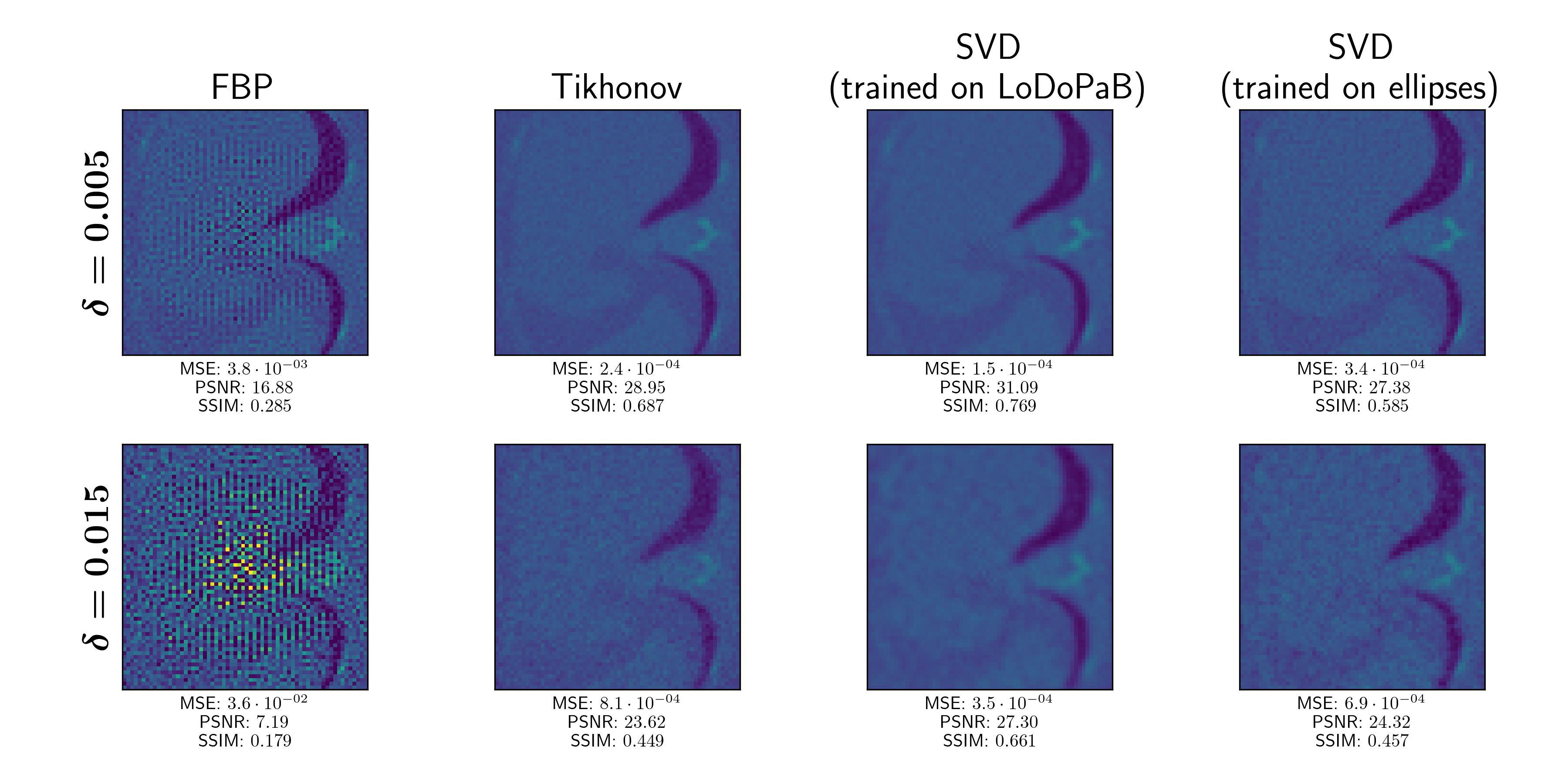}\\
    \caption{\review{Comparison of LoDoPaB reconstructions obtained by SVD-approach, trained on different datasets, for different levels of additive gaussian noise.}}
    \label{fig:reconstructions-trans-svd}
\end{figure}
\begin{figure}
    \centering
    \includegraphics[scale = 0.4]{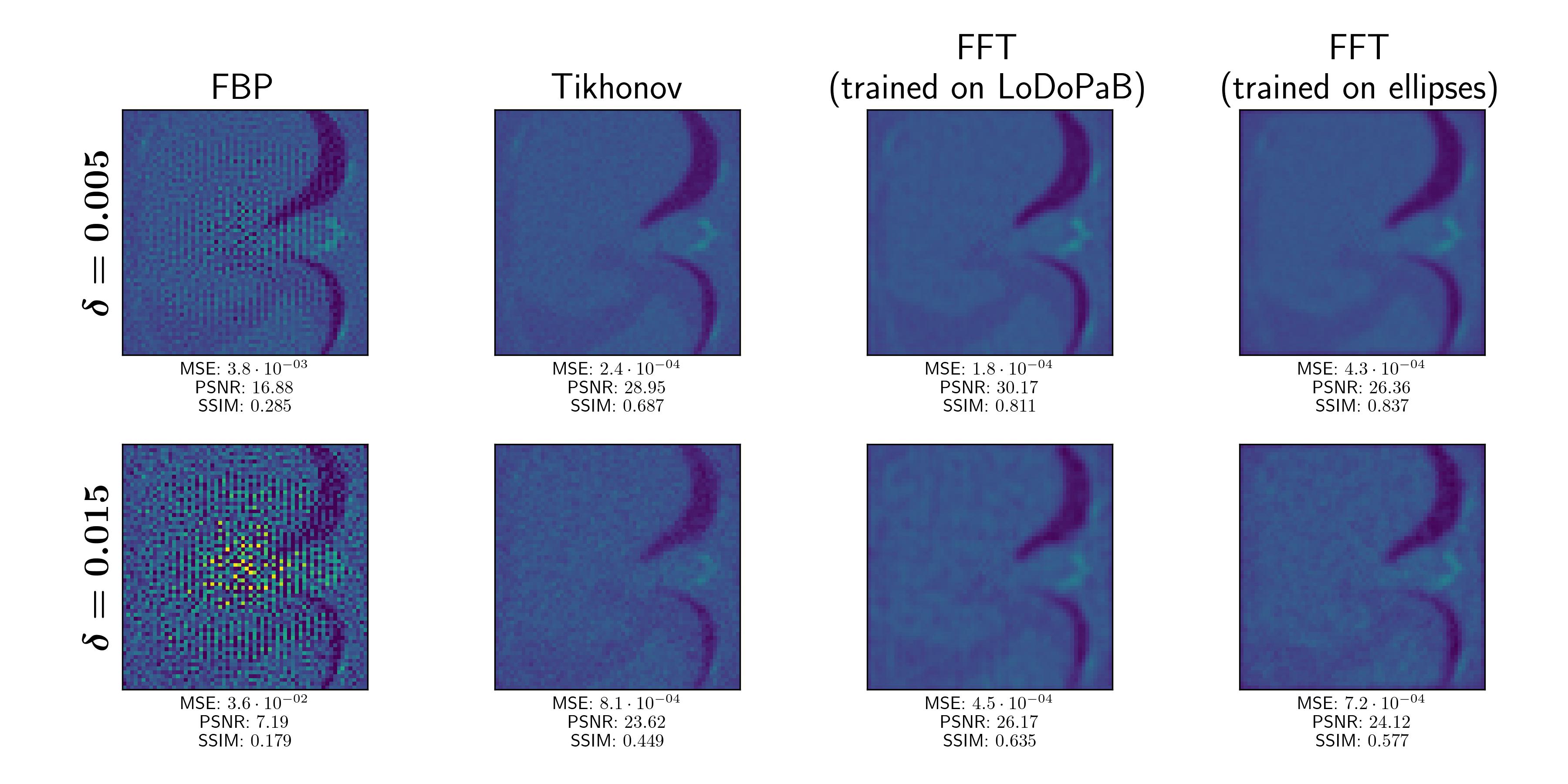}\\
    \caption{\review{Comparison of LoDoPaB reconstructions obtained by FFT-approach, trained on different datasets, for different levels of additive gaussian noise.}}
    \label{fig:reconstructions-trans-fft}
\end{figure}
To evaluate how well the learned regularizers generalize between different datasets, we show empirical results on the LoDoPaB-CT data set in this section. Figures \Cref{fig:reconstructions-lodopab,fig:reconstructions-trans-svd,fig:reconstructions-trans-fft} compare the results obtained with a regularizer trained on LoDoPaB-Ct dataset with the ones obtained with a regularizer trained on the synthetic ellipse dataset. While both approaches clearly improve the reconstructions compared to the filtered back-projection, the effect of optimizing the regularizers on the correct dataset is still clearly visible.}

\section{Conclusions \& Outlook}\label{sec:conclusions}
In this work, we studied the behavior of data-driven linear regularization of inverse problems. For problems emerging from compact linear forward operators we showed that the optimal linear regularization can be computed analytically with the singular value expansion of the operator. An analysis of the range conditions revealed an oversmoothing effect depending on the noise level. We further confirmed that the optimal regularization is convergent under suitable assumptions. By deriving analogous results for the Radon transform, a specific non-compact linear operator, we could establish a connection to the well-known approach of filtered back-projection. Numerical experiments with a discretized (and thus compact) version of the Radon transform verified our theoretical findings for compact operators. We additionally proposed a computationally more efficient discretization of the filtered back-projection by using the Fast Fourier Transform, which allows for finer discretizations of the operator. However, the filters obtained by this approach turned out to strongly deviate from the theoretical results for the continuous case. Especially in the regime of low noise, the discretization errors that come with this approach caused a heavy accuracy loss compared to the SVD-approach. Additionally, we empirically evaluated the convergence behavior of both regularization approaches for the discrete Radon transform with an increasing level of discretization.

For the future, we would like to better understand the discretization error that arises from the discretization of the Radon transform and its impact on the computed filters. It would also be interesting to study more general nonlinear data-driven regularization methods and to find formulations and criteria for which one can prove that these methods are convergent regularizations, similar to the analysis carried out in Section \ref{sec:convergence}.

\section*{Acknowledgements}
\review{A part of this work was carried out while SK and MBu were with the Friedrich-Alexander-Unversität Erlangen-Nürnberg.} SK and MB acknowledge support from DESY (Hamburg, Germany),
a member of the Helmholtz Association HGF. SK, AA, HB, MM and MBu acknowledge the support of the German Research Foundation, projects BU 2327/19-1 and MO 2962/7-1. DR acknowledges support from EPSRC grant EP/R513106/1. MBe acknowledges support from the Alan Turing Institute. This research utilized Queen Mary's Apocrita and Andrena HPC facilities, supported by QMUL Research-IT \url{http://doi.org/10.5281/zenodo.438045}.

\section*{Compliance with Ethical Standards}
\textbf{Conflict of Interest} On behalf of all authors, the corresponding author declares that there is no conflict of interest.

\printbibliography

@InProceedings{UNet,
author="Ronneberger, Olaf and Fischer, Philipp and Brox, Thomas", editor="Navab, Nassir and Hornegger, Joachim and Wells, William M. and Frangi, Alejandro F.",
    title="U-Net: Convolutional Networks for Biomedical Image Segmentation",
    booktitle="Medical Image Computing and Computer-Assisted Intervention -- MICCAI 2015",
    year="2015",
    publisher="Springer International Publishing",
    address="Cham",
    pages="234--241",
}

@inproceedings{bauermeister2020learning,
  title={Learning Spectral Regularizations for Linear Inverse Problems},
  author={Bauermeister, Hartmut and Burger, Martin and Moeller, Michael},
  booktitle={NeurIPS 2020 Workshop on Deep Learning and Inverse Problems}
}

@article{li2020nett,
  title={NETT: Solving inverse problems with deep neural networks},
  author={Li, Housen and Schwab, Johannes and Antholzer, Stephan and Haltmeier, Markus},
  journal={Inverse Problems},
  volume={36},
  number={6},
  pages={065005},
  year={2020},
  publisher={IOP Publishing}
}

@inproceedings{amos2017input,
  title={Input convex neural networks},
  author={Amos, Brandon and Xu, Lei and Kolter, J Zico},
  booktitle={ICML},
  pages={146--155},
  year={2017},
  organization={PMLR}
}

@misc{riccio2022regularization,
  title={Regularization of Inverse Problems: Deep Equilibrium Models versus Bilevel Learning},
  author={Riccio, Danilo and Ehrhardt, Matthias J and Benning, Martin},
  note={arXiv preprint arXiv:2206.13193}
}

@article{bai2019deep,
 author = {Bai, Shaojie and Kolter, J. Zico and Koltun, Vladlen},
 booktitle = {Advances in Neural Information Processing Systems},
 editor = {H. Wallach and H. Larochelle and A. Beygelzimer and F. d\textquotesingle Alch\'{e}-Buc and E. Fox and R. Garnett},
 pages = {},
 publisher = {Curran Associates, Inc.},
 title = {Deep Equilibrium Models},
 url = {https://proceedings.neurips.cc/paper_files/paper/2019/file/01386bd6d8e091c2ab4c7c7de644d37b-Paper.pdf},
 volume = {32},
 year = {2019}
}

@inproceedings{latorre2019fast,
  title={Fast and provable ADMM for learning with generative priors},
  author={Latorre, Fabian and Cevher, Volkan and others},
  journal={NeurIPS},
  year={2019}
}

@inproceedings{bora2017compressed,
  title={Compressed sensing using generative models},
  author={Bora, Ashish and Jalal, Ajil and Price, Eric and Dimakis, Alexandros G},
  booktitle={ICML},
  pages={537--546},
  year={2017},
  organization={PMLR}
}

@inproceedings{ulyanov2018deep,
  title={Deep image prior},
  author={Ulyanov, Dmitry and Vedaldi, Andrea and Lempitsky, Victor},
  booktitle={CVPR},
  pages={9446--9454},
  year={2018}
}

@inproceedings{rick2017one,
  title={One network to solve them all--solving linear inverse problems using deep projection models},
  author={Rick Chang, JH and Li, Chun-Liang and Poczos, Barnabas and Vijaya Kumar, BVK and Sankaranarayanan, Aswin C},
  booktitle={ICCV},
  pages={5888--5897},
  year={2017}
}

@inproceedings{meinhardt2017learning,
  title={Learning proximal operators: Using denoising networks for regularizing inverse imaging problems},
  author={Meinhardt, Tim and Moeller, Michael and Hazirbas, Caner and Cremers, Daniel},
  booktitle={ICCV},
  pages={1781--1790},
  year={2017}
}

@article{romano2017little,
  title={The little engine that could: Regularization by denoising (RED)},
  author={Romano, Yaniv and Elad, Michael and Milanfar, Peyman},
  journal={SIAM Journal on Imaging Sciences},
  volume={10},
  number={4},
  pages={1804--1844},
  year={2017},
  publisher={SIAM}
}

@inproceedings{moeller2019controlling,
  title={Controlling neural networks via energy dissipation},
  author={Moeller, Michael and Mollenhoff, Thomas and Cremers, Daniel},
  booktitle={ICCV},
  pages={3256--3265},
  year={2019}
}

@inproceedings{kobler2020total,
  title={Total deep variation for linear inverse problems},
  author={Kobler, Erich and Effland, Alexander and Kunisch, Karl and Pock, Thomas},
  booktitle={CVPR},
  pages={7549--7558},
  year={2020}
}

@article{chen2014insights,
  title={Insights into analysis operator learning: From patch-based sparse models to higher order MRFs},
  author={Chen, Yunjin and Ranftl, Rene and Pock, Thomas},
  journal={IEEE Transactions on Image Processing},
  volume={23},
  number={3},
  pages={1060--1072},
  year={2014},
  publisher={IEEE}
}

@article{roth2009fields,
  title={Fields of experts},
  author={Roth, Stefan and Black, Michael J},
  journal={International Journal of Computer Vision},
  volume={82},
  number={2},
  pages={205--229},
  year={2009},
  publisher={Springer}
}

@article{aharon2006k,
  title={K-SVD: An algorithm for designing overcomplete dictionaries for sparse representation},
  author={Aharon, Michal and Elad, Michael and Bruckstein, Alfred},
  journal={IEEE Transactions on signal processing},
  volume={54},
  number={11},
  pages={4311--4322},
  year={2006},
  publisher={IEEE}
}

@inproceedings{mairal2008supervised,
  title={Supervised dictionary learning},
  author={Mairal, Julien and Ponce, Jean and Sapiro, Guillermo and Zisserman, Andrew and Bach, Francis},
  booktitle={NeurIPS},
  year={2008}
}

@article{benning2018modern,
  title={Modern regularization methods for inverse problems},
  author={Benning, Martin and Burger, Martin},
  journal={Acta Numerica},
  volume={27},
  pages={1--111},
  year={2018},
  publisher={Cambridge University Press}
}

@ARTICLE{FBPConvNet,
  author={Jin, Kyong Hwan and McCann, Michael T. and Froustey, Emmanuel and Unser, Michael},
  journal={IEEE Transactions on Image Processing}, 
  title={Deep Convolutional Neural Network for Inverse Problems in Imaging}, 
  year={2017},
  volume={26},
  number={9},
  pages={4509-4522},
  doi={10.1109/TIP.2017.2713099}
}

@book{engl1996regularization,
  title={Regularization of inverse problems},
  author={Engl, Heinz Werner and Hanke, Martin and Neubauer, Andreas},
  volume={375},
  year={1996},
  publisher={Kluwer},
  address ={Dordrecht}
}

@ARTICLE{REDCNN,
  author={Chen, Hu and Zhang, Yi and Kalra, Mannudeep K. and Lin, Feng and Chen, Yang and Liao, Peixi and Zhou, Jiliu and Wang, Ge},
  journal={IEEE Transactions on Medical Imaging}, 
  title={Low-Dose CT With a Residual Encoder-Decoder Convolutional Neural Network}, 
  year={2017},
  volume={36},
  number={12},
  pages={2524-2535},
  doi={10.1109/TMI.2017.2715284}
}

@Article{AUTOMAP, author={Zhu, Bo and Liu, Jeremiah Z. and Cauley, Stephen F. and Rosen, Bruce R. and Rosen, Matthew S.},
    title={Image reconstruction by domain-transform manifold learning},
    journal={Nature},
    year={2018},
    month={Mar},
    day={01},
    volume={555},
    number={7697},
    pages={487-492},
    issn={1476-4687},
    doi={10.1038/nature25988},
    url={https://doi.org/10.1038/nature25988}
}

@ARTICLE{iRadonMap,
  title = "Radon inversion via deep learning",
  author = "He, Ji and Wang, Yongbo and Ma, Jianhua",
  journal = "IEEE Trans. Med. Imaging",
  publisher = "Institute of Electrical and Electronics Engineers (IEEE)",
  volume = 39,
  number = 6,
  pages = "2076--2087",
  month = jun,
  year =  2020,
  copyright = "https://ieeexplore.ieee.org/Xplorehelp/downloads/license-information/IEEE.html",
  language  = "en"
}

@ARTICLE{iCTNet,
  author={Li, Yinsheng and Li, Ke and Zhang, Chengzhu and Montoya, Juan and Chen, Guang-Hong},
  journal={IEEE Transactions on Medical Imaging}, 
  title={Learning to Reconstruct Computed Tomography Images Directly From Sinogram Data Under A Variety of Data Acquisition Conditions}, 
  year={2019},
  volume={38},
  number={10},
  pages={2469-2481},
  doi={10.1109/TMI.2019.2910760}
}

@ARTICLE{LEARN,
  title = "{LEARN}: Learned experts' assessment-based reconstruction network for sparse-data {CT}",
  author = "Chen, Hu and Zhang, Yi and Chen, Yunjin and Zhang, Junfeng and Zhang, Weihua and Sun, Huaiqiang and Lv, Yang and Liao, Peixi and Zhou, Jiliu and Wang, Ge",
  journal = "IEEE Trans. Med. Imaging",
  volume =  37,
  number =  6,
  pages = "1333--1347",
  month =  jun,
  year =  2018,
  language = "en"
}

@ARTICLE{FISTANet,
  author={Xiang, Jinxi and Dong, Yonggui and Yang, Yunjie},
  journal={IEEE Transactions on Medical Imaging}, 
  title={FISTA-Net: Learning a Fast Iterative Shrinkage Thresholding Network for Inverse Problems in Imaging}, 
  year={2021},
  volume={40},
  number={5},
  pages={1329-1339},
  doi={10.1109/TMI.2021.3054167}
}

@ARTICLE{LearnedPrimalDual,
  author={Adler, Jonas and Öktem, Ozan},
  journal={IEEE Transactions on Medical Imaging}, 
  title={Learned Primal-Dual Reconstruction}, 
  year={2018},
  volume={37},
  number={6},
  pages={1322-1332},
  doi={10.1109/TMI.2018.2799231}
}

@article{CTDIP,
	doi = {10.1088/1361-6420/aba415},
	url = {https://doi.org/10.1088/1361-6420/aba415},
	year = 2020,
	month = {sep},
	publisher = {{IOP} Publishing},
	volume = {36},
	number = {9},
	pages = {094004},
	author = {Daniel Otero Baguer and Johannes Leuschner and Maximilian Schmidt},
	title = {Computed tomography reconstruction using deep image prior and learned reconstruction methods},
	journal = {Inverse Problems},
}

@Article{CTPhysDIP,
    author={Barutcu, Semih and Aslan, Selin and Katsaggelos, Aggelos K. and G{\"u}rsoy, Do{\u{g}}a},
    title={Limited-angle computed tomography with deep image and physics priors},
    journal={Scientific Reports},
    year={2021},
    month={Sep},
    day={06},
    volume={11},
    number={1},
    pages={17740},
    issn={2045-2322},
    doi={10.1038/s41598-021-97226-2},
    url={https://doi.org/10.1038/s41598-021-97226-2}
}

@ARTICLE{DictLearnRecon,
  author={Xu, Moran and Hu, Dianlin and Luo, Fulin and Liu, Fenglin and Wang, Shaoyu and Wu, Weiwen},
  journal={IEEE Transactions on Radiation and Plasma Medical Sciences}, 
  title={Limited-Angle X-Ray CT Reconstruction Using Image Gradient $\ell_0$-Norm With Dictionary Learning}, 
  year={2021},
  volume={5},
  number={1},
  pages={78-87},
  doi={10.1109/TRPMS.2020.2991887}
}

@Article{ZhangSurvey,
    author={Zhang, Minghan and Gu, Sai and Shi, Yuhui},
    title={The use of deep learning methods in low-dose computed tomography image reconstruction: a systematic review},
    journal={Complex {\&} Intelligent Systems},
    year={2022},
    month={May},
    day={28},
    issn={2198-6053},
    doi={10.1007/s40747-022-00724-7},
    url={https://doi.org/10.1007/s40747-022-00724-7}
}

@Article{LeuschnerSurvey,
    AUTHOR = {Leuschner, Johannes and Schmidt, Maximilian and Ganguly, Poulami Somanya and Andriiashen, Vladyslav and Coban, Sophia Bethany and Denker, Alexander and Bauer, Dominik and Hadjifaradji, Amir and Batenburg, Kees Joost and Maass, Peter and van Eijnatten, Maureen},
    TITLE = {Quantitative Comparison of Deep Learning-Based Image Reconstruction Methods for Low-Dose and Sparse-Angle CT Applications},
    JOURNAL = {Journal of Imaging},
    VOLUME = {7},
    YEAR = {2021},
    NUMBER = {3},
    ARTICLE-NUMBER = {44},
    URL = {https://www.mdpi.com/2313-433X/7/3/44},
    ISSN = {2313-433X},
}

@Article{WangSurvey,
    author={Wang, Ge and Ye, Jong Chul and De Man, Bruno},
    title={Deep learning for tomographic image reconstruction},
    journal={Nature Machine Intelligence},
    year={2020},
    month={Dec},
    day={01},
    volume={2},
    number={12},
    pages={737-748},
    issn={2522-5839},
    doi={10.1038/s42256-020-00273-z},
    url={https://doi.org/10.1038/s42256-020-00273-z}
}

@inproceedings{Guedon04,
author = {Myriam C. J. Servieres and Nicolas Normand and Peggy Subirats and JeanPierre Guedon},
title = {{Some links between continuous and discrete Radon transform}},
volume = {5370},
booktitle = {Medical Imaging 2004: Image Processing},
editor = {J. Michael Fitzpatrick and Milan Sonka},
organization = {International Society for Optics and Photonics},
publisher = {SPIE},
pages = {1961 -- 1971},
year = {2004},
address ={WA, United States},
doi = {10.1117/12.533472}
}

@article{aspri2020data,
  title={A data-driven iteratively regularized Landweber iteration},
  author={Aspri, Andrea and Banert, Sebastian and {\"O}ktem, Ozan and Scherzer, Otmar},
  journal={Numerical Functional Analysis and Optimization},
  volume={41},
  number={10},
  pages={1190--1227},
  year={2020},
  publisher={Taylor \& Francis}
}

@article{arridge2019solving,
  title={Solving inverse problems using data-driven models},
  author={Arridge, Simon and Maass, Peter and {\"O}ktem, Ozan and Sch{\"o}nlieb, Carola-Bibiane},
  journal={Acta Numerica},
  volume={28},
  pages={1--174},
  year={2019},
  publisher={Cambridge University Press}
}

@Inbook{Ambrosio2013,
author="Ambrosio, Luigi
and Gigli, Nicola",
title="A User's Guide to Optimal Transport",
bookTitle="Modelling and Optimisation of Flows on Networks: Cetraro, Italy 2009, Editors: Benedetto Piccoli, Michel Rascle",
year="2013",
publisher="Springer Berlin Heidelberg",
address="Berlin, Heidelberg",
pages="1--155",
isbn="978-3-642-32160-3",
doi="10.1007/978-3-642-32160-3_1"
}

@inproceedings{alberti21,
 author = {Alberti, Giovanni S and De Vito, Ernesto and Lassas, Matti and Ratti, Luca and Santacesaria, Matteo},
 booktitle = {Advances in Neural Information Processing Systems},
 editor = {M. Ranzato and A. Beygelzimer and Y. Dauphin and P.S. Liang and J. Wortman Vaughan},
 pages = {25205--25216},
 publisher = {Curran Associates},
 address = {Inc.},
 title = {Learning the optimal Tikhonov regularizer for inverse problems},
 volume = {34},
 year = {2021}
}

@article{Leuschner_2021,
	doi = {10.1038/s41597-021-00893-z},
	url = {https://doi.org/10.1038%2Fs41597-021-00893-z},
	year = 2021,
	month = {apr},
	publisher = {Springer Science and Business Media {LLC}},
	volume = {8},
	number = {1},
	author = {Johannes Leuschner and Maximilian Schmidt and Daniel Otero Baguer and Peter Maass},
	title = {{LoDoPaB}-{CT},
   a benchmark dataset for low-dose computed tomography reconstruction},
	journal = {Scientific Data}
}

@article{chung2011designing,
  title={Designing optimal spectral filters for inverse problems},
  author={Chung, Julianne and Chung, Matthias and O'Leary, Dianne P},
  journal={SIAM Journal on Scientific Computing},
  volume={33},
  number={6},
  pages={3132--3152},
  year={2011},
  publisher={SIAM}
}

\appendix
\section{Results for uniformly distributed noise}\label{app:uni}
	In this Appendix, we present the results for the numerical experiments introduced in Section \ref{sec:numerics} with another noise model. More precisely, instead of using centered Gaussian noise with variance $\delta^2$, for a noise level $\delta > 0$, we corrupt the simulated sinograms with noise that is uniformly distributed in the interval $[-\sqrt{3}\delta, \sqrt{3}\delta]$. Based on the reconstructions shown in Figures \ref{fig:reconstructions-elli-uni},  \ref{fig:reconstructions-trans-uni-svd}, and \ref{fig:reconstructions-trans-uni-fft} we deduce that the method is robust with respect to different noise models.
\begin{figure}[h!]
    \centering
    \includegraphics[scale = 0.4]{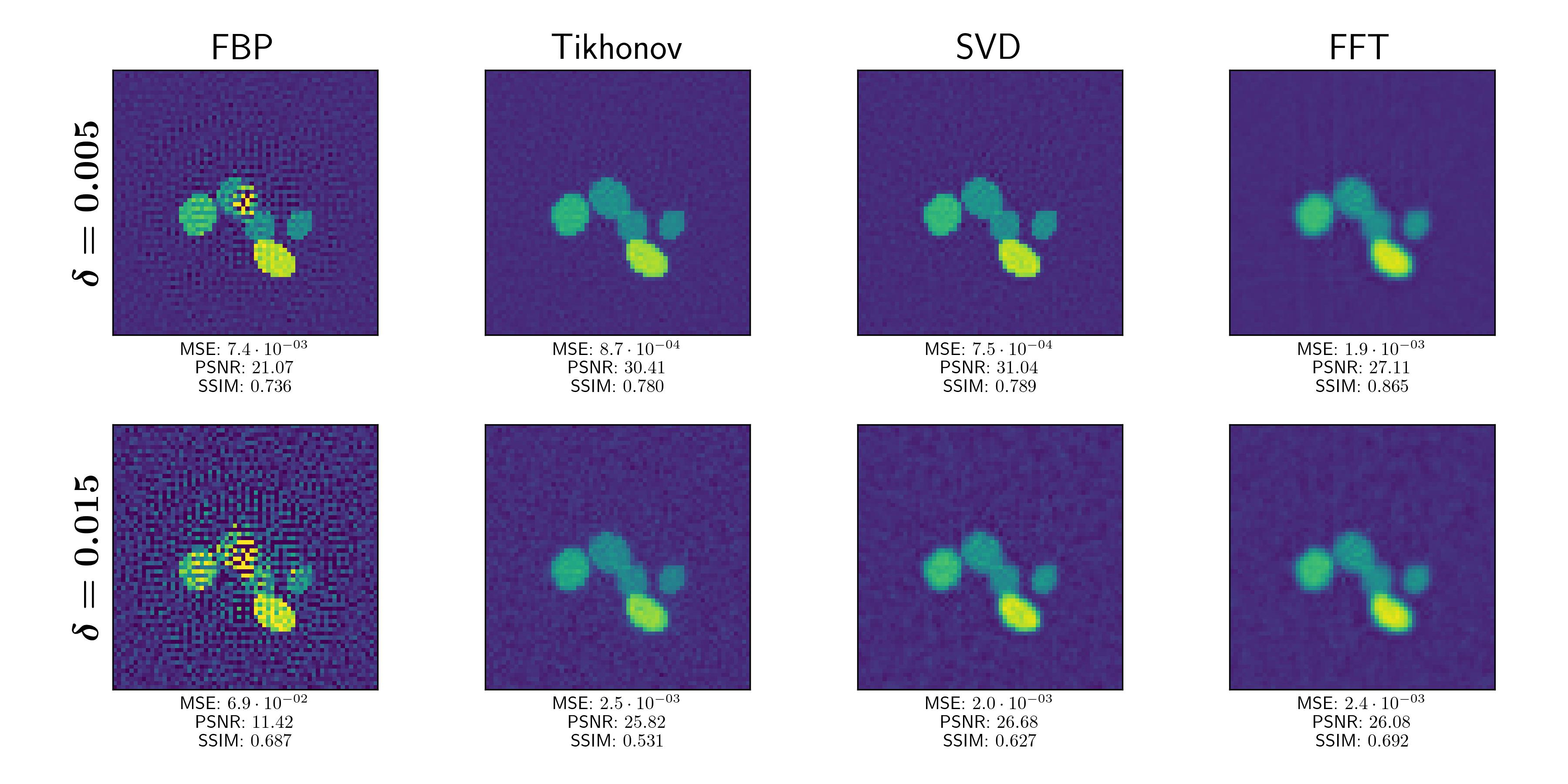}\\
    \caption{Comparison of the ellipse reconstructions obtained by different approaches
for different levels of additive uniform noise.}
    \label{fig:reconstructions-elli-uni}
\end{figure}
\begin{figure}[t!]
    \centering
    \includegraphics[scale = 0.4]{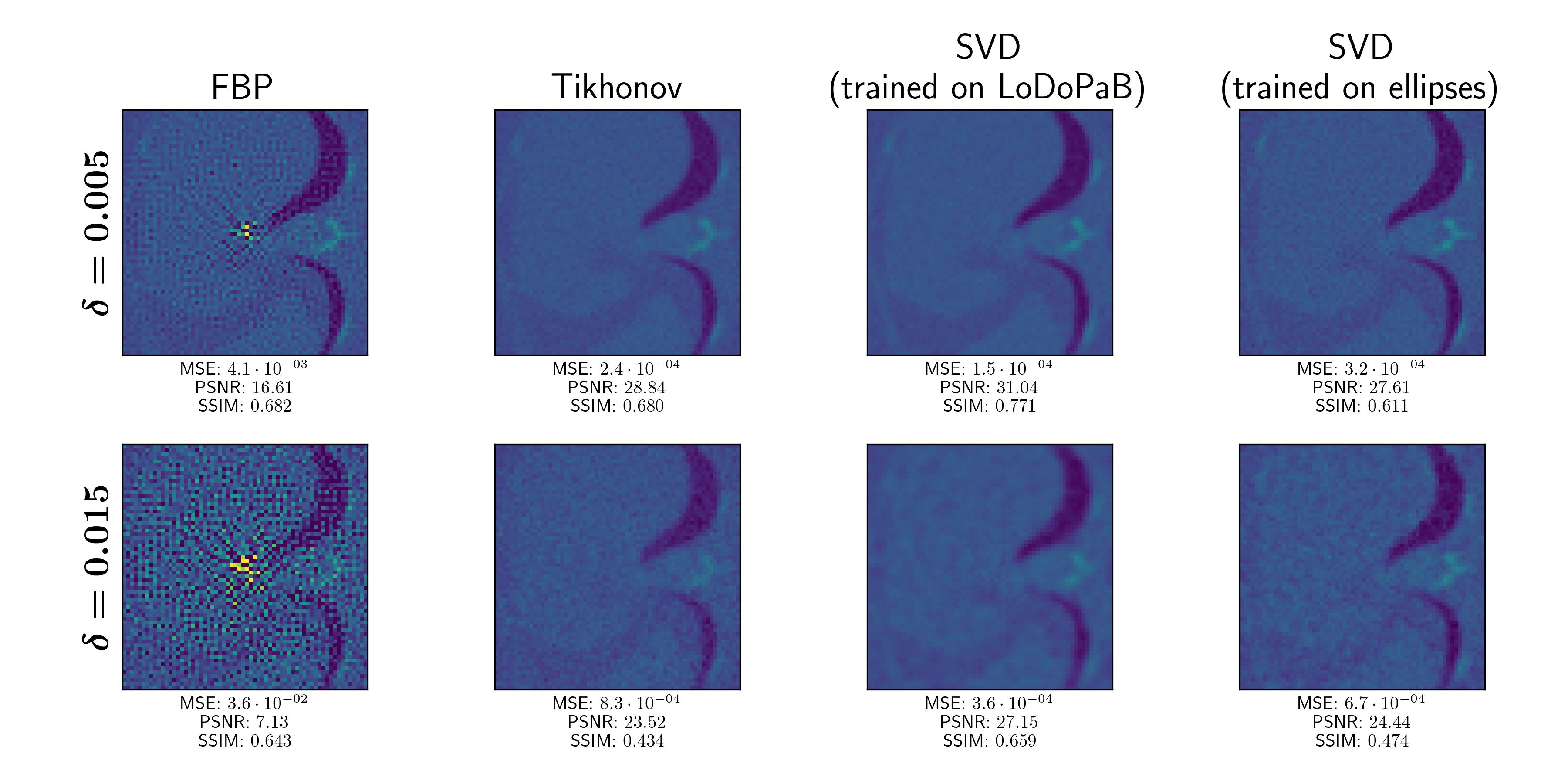}\\
    \caption{Comparison of LoDoPaB reconstructions obtained by SVD-approach, trained on different datasets, for different levels of additive uniform noise.}
    \label{fig:reconstructions-trans-uni-svd}
\end{figure}
\begin{figure}[b!]
    \centering
    \includegraphics[scale = 0.4]{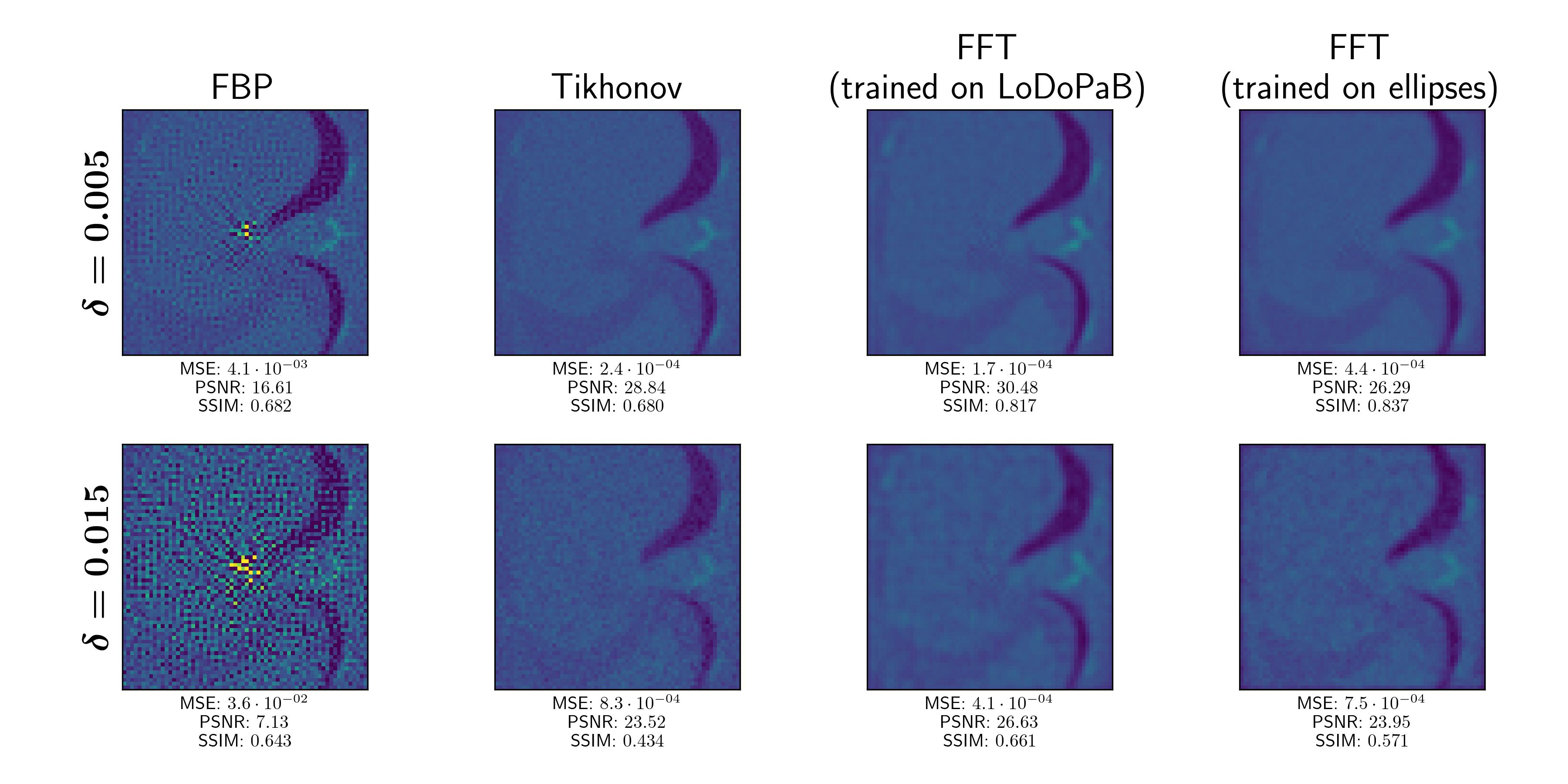}\\
    \caption{Comparison of LoDoPaB reconstructions obtained by SVD-approach, trained on different datasets, for different levels of additive uniform noise.}
    \label{fig:reconstructions-trans-uni-fft}
\end{figure}

\end{document}